\newtheorem{definition}{Definition}[section]
\newtheorem{theorem}[definition]{Theorem}
\newtheorem{lemma}[definition]{Lemma}
\newtheorem{proposition}[definition]{Proposition}
\theoremstyle{definition}
\newcommand{\la}{\left\langle}
\newcommand{\ra}{\right\rangle}
\newcommand{\bbC}{\mathbb{C}}
\newcommand{\adcom}{{\pi(A)}^{\prime \prime}}
\begin{document}


\title[Barycentric Decompositions in the Space of WE]{Barycentric Decompositions in the Space of Weak Expectations}

\author[A.~Bhattacharya]{Angshuman Bhattacharya}
\address{Department of Mathematics, IISER Bhopal, MP 462066, India}
\email{angshu@iiserb.ac.in}

\author[C. J.~Kulkarni]{Chaitanya J. Kulkarni}
\address{Department of Mathematics, IISER Bhopal, MP 462066, India}
\email{kulkarni18@iiserb.ac.in}

\keywords{Weak expectations, extreme points, Radon-Nikodym derivatives, Choquet's barycentric decomposition, Hilbert modules}
\subjclass[2010]{Primary (2020) 46A55, 46B22, 47B65; Secondary (2020) 46L08}


\begin{abstract}
The space of weak expectations for a given representation of a (unital) C*-algebra is a compact convex subset of the unit ball of completely bounded maps in the BW topology, when it is non-empty. An application of the classical Choquet theory gives a barycentric decomposition of a weak expectation in that set. However, to complete the barycentric picture, one needs to know the extreme points of the compact convex set in question. In this article, we explicitly identify the set of extreme points of the space of weak expectations for a given representation, using operator theoretic techniques. 
\end{abstract}

\maketitle


\section{Introduction} \label{sec;I}

Let $A$ be a unital C*-algebra and $\pi: A\rightarrow B(H)$ be an arbitrary but fixed non-degenerate representation of $A$ on a Hilbert space $H$. A unital completely positive map $\theta : B(H) \rightarrow \adcom$ is called a weak expectation of $\pi$ if $$\theta (\pi (a))=\pi(a)$$ for all $a\in A$. We fix the following notation for this article:
\begin{eqnarray*}
\rm{WE}(\pi) &:=& \{\theta :B(H) \rightarrow \adcom ~|~ \theta~ \mbox{is a weak expectation of}~ \pi\} \\
\rm{CB}(B(H), \adcom) &:=& \{\theta :B(H) \rightarrow \adcom ~|~ \theta~ \mbox{is completely bounded} \} 
\end{eqnarray*}  
The set $\rm{CB}(B(H), \adcom)$ is a locally convex topological vector space with respect to the BW-topology. Let $\rm{CP}_1(B(H), \adcom)$ be the set of all completely positive maps from $B(H)$ to $\adcom$ with norm less than or equal to 1. This set is a compact, convex subset of the topological vector space $\rm{CB}(B(H), \adcom)$. The set $\rm{WE}(\pi)$, when \textit{non-empty}, is a closed convex subset of $\rm{CP}_1(B(H), \adcom)$ in the BW-topology and hence compact. 

While it is not necessary in general, that $\rm{WE}(\pi)\neq \emptyset$, there are several well known cases in which it is indeed so. Some immediate examples would be: $\rm{WE}(\pi)\neq \emptyset$ for any irreducible representation $\pi$ and the GNS representation $\pi=\pi_\tau$ of an \textit{amenable trace} $\tau$ on a C*-algebra, see \cite[Chapter 6]{BO}. Another example would be the inclusion representation of any separable C*-algebra $A\hookrightarrow B(H)$, which is \textit{relatively weakly injective} in $B(H)$. See \cite[Lemma 3.4]{Kir93} for the abundant existence of such C*-algebras. Henceforth, in this article we will always assume $\rm{WE}(\pi)\neq \emptyset$ for the fixed representation $\pi$.

Recall that, the classical Choquet-Bishop-de Leeuw theorem of integral representation of a point in a compact convex affine space by a measure on the space states that \cite{Phlp, OB1}:

\begin{theorem} [Choquet-Bishop-de Leeuw] \label{cbl}
Let $X$ be a compact convex subset of a locally convex topological vector space $E$. Let $x_0 \in X$. Then there exist a Baire measure $\mu_{x_0}$ pseudo supported on the extreme points $\rm{ext}(X)$ of $X$ such that for any continuous linear functional $f: E \rightarrow \bbC$ one has: $$f(x_0)=\int f ~\rm{d}\mu_{x_0}.$$ 
\end{theorem}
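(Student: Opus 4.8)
The plan is to obtain $\mu_{x_0}$ as a \emph{maximal} probability measure in the Choquet ordering and then to show that maximality already forces the pseudo-support condition. Equip $P(X)$, the set of regular Borel probability measures on $X$, with the weak-$*$ topology induced by $C(X)$; by the Riesz representation theorem together with Banach--Alaoglu, $P(X)$ is compact and convex, and since $X$ is compact convex in the locally convex space $E$, each $\mu\in P(X)$ has a well-defined barycenter $b(\mu)\in X$, characterised by $f(b(\mu))=\int f\,d\mu$ for all $f\in E^{*}$. Order $P(X)$ by declaring $\mu\prec\nu$ when $\mu(g)\le\nu(g)$ for every continuous real convex function $g$ on $X$; applying this to affine functions shows $\mu\prec\nu$ implies $b(\mu)=b(\nu)$, while Jensen's inequality shows $\delta_{x_0}\prec\mu$ for every $\mu$ with $b(\mu)=x_0$. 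The set $\{\mu\in P(X):b(\mu)=x_0\}$ is nonempty and weak-$*$ closed, hence compact; a routine compactness argument (take a weak-$*$ cluster point of a $\prec$-chain) shows it is inductively ordered, so Zorn's lemma yields a $\prec$-maximal $\mu_{x_0}$ with $b(\mu_{x_0})=x_0$. For any continuous linear $f\colon E\to\bbC$, splitting into real and imaginary parts and invoking the barycenter identity gives $f(x_0)=\int f\,d\mu_{x_0}$; this half is elementary and would already hold for $\delta_{x_0}$.

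The substance is to prove that a $\prec$-maximal $\mu$ is pseudo-supported on $\mathrm{ext}(X)$, i.e. $\mu(B)=0$ for every Baire set $B$ with $B\cap\mathrm{ext}(X)=\emptyset$. For a bounded function $h$ on $X$ let $\widehat{h}=\inf\{a:a\in A(X),\ a\ge h\}$ be its upper envelope, where $A(X)$ is the space of continuous affine functions; then $\widehat h$ is concave and upper semicontinuous, $\widehat h\ge h$, and one has the identity $\widehat h(x)=\sup\{\nu(h):\nu\in P(X),\ b(\nu)=x\}$. Two ingredients are then needed. First, a lemma characterising maximality: $\mu$ is $\prec$-maximal if and only if $\mu(\widehat g)=\mu(g)$ for every continuous convex $g$; granting this, $\widehat g-g\ge 0$ is upper semicontinuous with $\mu(\widehat g-g)=0$, so the closed sets $\{\widehat g-g\ge 1/n\}$ are $\mu$-null and hence $\mu$ vanishes on $\{\widehat g>g\}$ for each continuous convex $g$. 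Second, the geometric identity $\mathrm{ext}(X)=\bigcap_{g}\{x:\widehat g(x)=g(x)\}$, intersection over all continuous convex $g$: the inclusion $\mathrm{ext}(X)\subseteq\bigcap_{g}\{\widehat g=g\}$ follows from Bauer's characterisation ($x$ extreme iff $\delta_x$ is the only measure with barycenter $x$) combined with the envelope identity, while for the reverse inclusion, given a non-extreme $x=\tfrac12(y+z)$ with $y\ne z$ one picks a continuous seminorm $p$ on $E$ with $p(y-z)>0$ and sets $g(w)=p(w-x)$, a continuous convex function with $g(x)=0$ but $\widehat g(x)\ge\tfrac12(g(y)+g(z))>0$ by concavity of $\widehat g$.

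Combining the two ingredients, $X\setminus\mathrm{ext}(X)=\bigcup_{g}\{\widehat g>g\}$ and $\mu$ is null on every member of this family. When $X$ is metrizable, $C(X)$ is separable and the union can be taken over a countable cofinal family of convex $g$, so $\mu(\mathrm{ext}(X))=1$ and one has genuine support (this recovers Choquet's original metrizable theorem, with $\mathrm{ext}(X)$ a $G_\delta$); in the general case, a Baire set $B$ disjoint from $\mathrm{ext}(X)$ lies in the $\sigma$-algebra generated by some countable family in $C(X)$, and one reduces to a metrizable quotient determined by that family (suitably enlarged to be stable under the operations producing the relevant convex functions) and transports the metrizable conclusion back to get $\mu(B)=0$. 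This last step --- upgrading genuine support in the metrizable case to pseudo-support in general --- is the only genuinely delicate point; everything else is the standard Choquet-theory toolkit, and in the paper one would of course simply cite \cite{Phlp, OB1}.
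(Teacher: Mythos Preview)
The paper does not prove Theorem~\ref{cbl} at all: it is stated as background and attributed to the references \cite{Phlp, OB1}, with no argument given. Your proposal is therefore not competing with any proof in the paper; it is a sketch of the standard textbook proof that those references contain.

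As a sketch of the classical argument, your outline is essentially correct and hits the right milestones: existence of a $\prec$-maximal representing measure via Zorn, the Mokobodzki-type characterisation $\mu$ maximal $\Leftrightarrow \mu(g)=\mu(\widehat g)$ for all continuous convex $g$, the identification $X\setminus\mathrm{ext}(X)=\bigcup_g\{\widehat g>g\}$, and the passage from the metrizable to the general case. The one place where your write-up is genuinely hand-wavy is the last paragraph: the Bishop--de~Leeuw reduction is not literally ``pass to a metrizable quotient and transport back''. The cleaner route (and the one in \cite{Phlp}) is to note that every Baire set lies in the $\sigma$-algebra generated by a \emph{countable} family of continuous functions, hence in the $\sigma$-algebra generated by countably many continuous convex functions; one then shows directly that a maximal measure vanishes on any set in this $\sigma$-algebra disjoint from $\mathrm{ext}(X)$, using that countably many sets of the form $\{\widehat g>g\}$ already cover the obstruction. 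Your quotient language can be made to work, but as written it hides exactly the step that needs care. For the purposes of this paper, of course, the citation suffices, as you yourself note.
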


In the above theorem, we need $E$ to be a locally convex topological vector space to just ensure the existence of sufficiently many linear functionals in $E^*$ that separate points in $X$. Now, given the representation $\pi:A\rightarrow B(H)$ of a unital C*-algebra $A$ and the set $\rm{WE}(\pi)$, we see that the classical Choquet-Bishop-de Leeuve theorem is applicable for the locally convex topological vector space $E = \rm{CB}(B(H), \adcom)$ (equipped with the BW-topology) and $X = \rm{WE}(\pi)$ as the compact convex set in question. We consider the following separating family of continuous linear functionals on $E = \rm{CB}(B(H), \adcom)$, which separates the points in $X = \rm{WE}(\pi)$. The family is given by:
\begin{equation*}
\{ f^S_{(h, k)}: E  \rightarrow \bbC ~ | ~ f^S_{(h, k)}(\phi) := \la \phi(S)h, k \ra; S \in B(H), h,k \in H, \phi \in E \}.
\end{equation*}

This gives a \textit{barycentric decomposition} of any weak expectation in $\rm{WE}(\pi)$. However, to complete this decomposition picture, one needs to identify the extreme points of the set $\rm{WE}(\pi)$ explicitly. This is precisely what this article accomplishes: \textit{using operator theoretic techniques, we identify which weak expectations (of $\pi$) constitute the set of extreme points in} $\rm{WE}(\pi)$. 

Using some technical results from Section \ref{sec;EP}, we explicitly identify the extreme points of $\rm{WE}(\pi)$ in Theorem \ref{mt2}. While the basic idea of the proof is inspired by Arveson's \cite{A69} operator theoretic proof of describing the extreme points of completely positive maps into $B(H)$, there are several subtle differences in our constructions, owing to the constraints levied by the properties of a weak expectation. Further, we obtain an equivalent formulation of extreme points in terms of operators on certain Hilbert modules (Theorem \ref{mt2}(4)), using Paschke's Hilbert module dilation of completely positive maps, \cite{WP73}.

\section{Preliminaries} \label{secP}

In this section we briefly recall the results useful for us in this article. First and foremost, we recall the Stinespring dilation theorem. 

\begin{theorem}\cite[Theorem 4.1]{VP}\label{thm;SD}
Let $A$ be a unital C*-algebra and $H$ be a Hilbert space. Let $\phi : A \rightarrow B(H)$ be a completely positive map. Then there exists a Hilbert space $K$, a bounded linear map $V : H \rightarrow K$ and a unital *-homomorphism $\rho : A \rightarrow B(K)$ such that
\begin{equation*}
\phi(a) = V^*\rho(a)V \hspace{10pt} \text{for all $a \in A$}. 
\end{equation*} 
Moreover, the set $\{ \rho(a)Vh : a \in A, h \in H \}$ spans a dense subspace of $K$.
\end{theorem}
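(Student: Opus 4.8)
The plan is to carry out the usual GNS-type construction on the algebraic tensor product $A \odot H$ and then extract $K$, $\rho$ and $V$ from it.

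First I would equip $A \odot H$ with the sesquilinear form determined on elementary tensors by $\langle a \otimes h,\, b \otimes k \rangle := \langle \phi(b^{*}a)h,\, k \rangle$. For $u = \sum_{i=1}^{n} a_{i} \otimes h_{i}$ one computes $\langle u, u \rangle = \sum_{i,j} \langle \phi(a_{i}^{*}a_{j})h_{j},\, h_{i} \rangle$, and this is $\geq 0$ because the matrix $[a_{i}^{*}a_{j}]_{i,j}$ is positive in $M_{n}(A)$ (it is of the form $R^{*}R$ with $R = (a_{1}, \dots, a_{n}) \in M_{1,n}(A)$) and $\phi$ is completely positive; hence the form is positive semidefinite. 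Let $N := \{ u \in A \odot H : \langle u, u \rangle = 0 \}$, which is a linear subspace by Cauchy--Schwarz, so that $A \odot H / N$ is an inner product space. I let $K$ be its Hilbert space completion, so that $\xi \mapsto \xi + N$ sends $A \odot H$ onto a dense subspace of $K$.

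Next I would define, for each $a \in A$, the map $\rho(a)$ on $A \odot H$ by left multiplication, $\rho(a)(b \otimes h) := (ab) \otimes h$. The key estimate is $\langle \rho(a)u,\, \rho(a)u \rangle \leq \|a\|^{2}\,\langle u, u \rangle$ for all $u \in A \odot H$: writing $\|a\|^{2}1 - a^{*}a = d^{*}d$ in $A$, one gets $\|a\|^{2}\langle u, u \rangle - \langle \rho(a)u, \rho(a)u \rangle = \sum_{i,j} \langle \phi\bigl((db_{i})^{*}(db_{j})\bigr)h_{j},\, h_{i} \rangle \geq 0$, again because $[(db_{i})^{*}(db_{j})]_{i,j}$ is positive in $M_{n}(A)$ and $\phi$ is completely positive. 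Consequently $\rho(a)$ carries $N$ into $N$, descends to a bounded operator on $A \odot H / N$ of norm at most $\|a\|$, and therefore extends to an operator $\rho(a) \in B(K)$. From the definition of the form one checks directly that $a \mapsto \rho(a)$ is linear and multiplicative, that $\langle \rho(a)u, v \rangle = \langle u, \rho(a^{*})v \rangle$, and that $\rho(1)$ is the identity on $K$; so $\rho : A \to B(K)$ is a unital $*$-homomorphism.

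Then I would define $V : H \to K$ by $Vh := 1 \otimes h + N$, using the unit of $A$; it is bounded since $\|Vh\|^{2} = \langle \phi(1)h,\, h \rangle \leq \|\phi(1)\|\,\|h\|^{2}$. For $a \in A$ and $h, k \in H$ one computes $\langle V^{*}\rho(a)Vh,\, k \rangle = \langle \rho(a)(1 \otimes h),\, 1 \otimes k \rangle = \langle (a \cdot 1) \otimes h,\, 1 \otimes k \rangle = \langle \phi(a)h,\, k \rangle$, so $\phi(a) = V^{*}\rho(a)V$ for every $a \in A$. Finally, $\rho(a)Vh = (a \cdot 1) \otimes h + N = a \otimes h + N$, so $\{ \rho(a)Vh : a \in A,\, h \in H \}$ spans exactly the image of $A \odot H$ in $K$, which is dense; this is the asserted minimality. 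The only genuinely delicate point I anticipate is the norm estimate for $\rho(a)$ on the quotient — that left multiplication passes through the null space $N$ with the bound $\|\rho(a)\| \leq \|a\|$ — and this is precisely where complete positivity of $\phi$, rather than mere positivity, is needed, via the positivity of $[\phi((db_{i})^{*}(db_{j}))]$ in $M_{n}(B(H))$. Everything else is routine algebra together with the one-line density argument for the last clause.
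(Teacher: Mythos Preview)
Your proof is correct and follows exactly the standard GNS-type construction that the paper invokes: the paper does not give its own proof of this theorem but cites \cite[Theorem~4.1]{VP}, and in Section~\ref{sec;EP} it recalls precisely the same construction you carry out (the sesquilinear form on the algebraic tensor product, the quotient by the null space $N$, completion to $K$, $\rho$ by left multiplication, and $V(h)=1\otimes h+N$). There is nothing to add.
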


We call the triple $(\rho, V, K)$ from Theorem \ref{thm;SD} as the minimal Stinespring representation for $\phi$. For a given completely positive map $\phi$, the minimal Stinespring representation is unique upto an unitary equivalence  (see \cite[Proposition 4.2]{VP}).

Let $A$ be a unital C*-algebra and $\rm{CP}(A, B(H))$ be the set of all completely positive maps from $A$ to $B(H)$ where $H$ is a Hilbert space. The partial order on $\rm{CP}(A, B(H))$, defined by $\phi_1 \leq \phi_2$, if $\phi_2 - \phi_1 \in \rm{CP}(A, B(H))$. For $\phi \in \rm{CP}(A, B(H))$, then $$[0, \phi] = \{ \theta \in CP(A, B(H)) : \theta \leq \phi \}.$$ Let $\phi (a) = V^* \rho (a) V$ be the \textit{minimal} Stinespring dilation of $\phi$, where $V$ is a bounded map from $H$ to $K$ and $\rho$ is representation of $A$ on $K$. For each operator $T \in \rho(A)^\prime$, define $\phi_T := V^* \rho T V$ to be a map from $A$ to $B(H)$. Arveson proved the following Radon--Nikodym type theorem for completely positive maps in \cite{A69}:

\begin{theorem} \cite[Theorem 1.4.2]{A69} \label{rnd}
The map $T \mapsto \phi_T = V^* \rho T V$ is an affine order isomorphism of the partially ordered convex set of operators $\{T \in \rho(A)^\prime : 0 \leq T \leq  1_K \}$ onto $[0, \phi]$.
\end{theorem}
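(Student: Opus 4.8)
The plan is to establish, in turn, that $T \mapsto \phi_T$ is well defined into $[0,\phi]$, is affine and order preserving, is injective, and is surjective; only the last of these requires real work. Throughout I would fix the \emph{minimal} Stinespring triple $(\rho, V, K)$ of $\phi$ from Theorem \ref{thm;SD}, so that $K_0 := \operatorname{span}\{\rho(a)Vh : a \in A,\ h \in H\}$ is dense in $K$, and I would record at the outset that $Vh = \rho(1)Vh \in K_0$ since $\rho$ is unital.

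For the easy directions: given $T \in \rho(A)'$ with $0 \le T \le 1_K$, the square root $T^{1/2}$ still lies in $\rho(A)'$ (which is a von Neumann algebra), so writing $\phi_T(a) = V^*\rho(a)TV = (T^{1/2}V)^*\rho(a)(T^{1/2}V)$ exhibits $\phi_T$ as completely positive; applying the same remark to the positive contraction $1_K - T \in \rho(A)'$ gives $\phi - \phi_T = \phi_{1_K-T} \in \mathrm{CP}(A,B(H))$, hence $\phi_T \in [0,\phi]$. Affineness is immediate from the linearity of the formula in $T$, and order-preservation follows by the same factorization applied to $T_2 - T_1 \ge 0$. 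For injectivity, if $\phi_{T_1} = \phi_{T_2}$ I would set $S = T_1 - T_2 \in \rho(A)'$ and compute, using that $S$ commutes with $\rho(b^*a)$,
$$\langle S\rho(a)Vh,\ \rho(b)Vk\rangle = \langle V^*\rho(b^*a)SVh,\ k\rangle = \langle (\phi_{T_1}-\phi_{T_2})(b^*a)h,\ k\rangle = 0,$$
so $S$ annihilates the dense subspace $K_0$ and therefore $S = 0$.

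The substantive step is surjectivity. Given $\psi \in [0,\phi]$, I would define a sesquilinear form on $K_0$ by
$$\Big[\, \textstyle\sum_i \rho(a_i)Vh_i,\ \sum_j \rho(b_j)Vk_j \,\Big] := \sum_{i,j}\langle \psi(b_j^*a_i)h_i,\ k_j\rangle,$$
which is positive semidefinite by complete positivity of $\psi$ and, since $\phi - \psi$ is completely positive while $\sum_{i,j}\langle\phi(a_j^*a_i)h_i, h_j\rangle = \|\sum_i \rho(a_i)Vh_i\|^2$, is dominated on the diagonal by the inner product of $K$; hence it is well defined on $K_0$ (independent of the representative) and bounded, so it is implemented by a unique operator $T$ on $K$ with $0 \le T \le 1_K$ and $[\xi,\eta] = \langle T\xi,\eta\rangle$. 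I would then check $T \in \rho(A)'$ by verifying that $\langle T\rho(c)\xi, \eta\rangle$ and $\langle\rho(c)T\xi,\eta\rangle$ agree on $K_0$ (both reduce to expressions of the form $\langle\psi(b^*ca)h,k\rangle$), and finally recover $\psi$, using $T \in \rho(A)'$ and $Vk = \rho(1)Vk$, via
$$\langle \phi_T(a)h,\ k\rangle = \langle T\rho(a)Vh,\ Vk\rangle = [\rho(a)Vh,\ Vk] = \langle\psi(a)h,\ k\rangle .$$
I expect the main obstacle to be precisely this construction of $T$: proving that the form $[\,\cdot,\cdot\,]$ is simultaneously positive and dominated by the $K$-inner product (so that it descends to an honest bounded contraction on $K$), and then confirming that the resulting $T$ does land in the commutant $\rho(A)'$ and inverts $T \mapsto \phi_T$. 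Everything else is bookkeeping with the minimality of the Stinespring dilation.
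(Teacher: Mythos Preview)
The paper does not prove this theorem; it is quoted verbatim as a preliminary result from Arveson \cite[Theorem~1.4.2]{A69}, so there is no ``paper's own proof'' to compare against. Your argument is correct and is precisely Arveson's original proof: the forward direction via the factorization $\phi_T(a)=(T^{1/2}V)^*\rho(a)(T^{1/2}V)$, injectivity from minimality of the Stinespring dilation, and surjectivity by building the operator $T$ from the sesquilinear form $[\sum_i\rho(a_i)Vh_i,\sum_j\rho(b_j)Vk_j]:=\sum_{i,j}\langle\psi(b_j^*a_i)h_i,k_j\rangle$, whose positivity and domination by the $K$-inner product follow from $0\le\psi\le\phi$.
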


We shall also use Paschke's dilation of completely positive maps on Hilbert modules. We quickly recall a few results about the Paschke's dilation of completely positive maps that are required in this article.

Let $A$ and $B$ be C*-algebras and $X$ be a Hilbert $B$-module. Suppose $\sigma: A \rightarrow \mathcal{P}(X)$ be *-representation of $A$ on $X$ where $\mathcal{P}(X)$ is the set of all \textit{adjointable operators} on $X$ (see \cite{WP73}, for the definition of an adjointable operator on $X$). The adjointable operators $\mathcal{P}(X)$ form a C*-algebra \cite{WP73}. For $e \in X$, the map $\phi : A \rightarrow B$ defined by
\begin{eqnarray*}
\phi(a) := \langle \sigma(a)e, e \rangle
\end{eqnarray*}
is a completely positive map. Paschke proved in \cite{WP73} that every completely positive map from $A$ into $B$ arises this way. The following theorem is referred to the Paschke's dilation

\begin{theorem} \cite[Theorem 5.2]{WP73} \label{pd}
Let $A$ and $B$ be unital C*-algebras and $\phi \colon A \rightarrow B$ be a completely positive map. Then there exists a Hilbert $B$-module $X$, a *-representation $\sigma \colon A \rightarrow \mathcal{P}(X)$ and $e \in X$ such that $\phi(a) = \langle \sigma(a)e, e \rangle$ for all $a \in A$. The set $\{ \sigma(a)(e.b) : a \in A, b \in B \}$ spans a dense subspace of $X$.
\end{theorem}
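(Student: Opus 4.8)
The plan is to reproduce, inside the category of Hilbert $B$-modules, the Stinespring/GNS construction, with scalar-valued inner products replaced by $B$-valued ones. I would start from the algebraic tensor product $A \odot B$, regarded as a right $B$-module via $(a \otimes b)\cdot c = a \otimes bc$, and put on it the $B$-valued sesquilinear form determined on elementary tensors by
\[
\langle a_1 \otimes b_1,\ a_2 \otimes b_2 \rangle := b_2^{*}\,\phi(a_2^{*} a_1)\,b_1 .
\]
The first thing to verify is positivity in the $C^{*}$-sense: for $x = \sum_{i=1}^{n} a_i \otimes b_i$ one computes $\langle x, x\rangle = \sum_{i,j} b_i^{*}\,\phi(a_i^{*} a_j)\,b_j$, and this is a positive element of $B$ because the matrix $[\phi(a_i^{*} a_j)]_{i,j}$ is positive in $M_n(B)$ --- which is exactly the content of complete positivity of $\phi$ applied to the positive matrix $[a_i^{*} a_j]_{i,j} \in M_n(A)$. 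Setting $N = \{x : \langle x, x\rangle = 0\}$, the $B$-valued Cauchy--Schwarz inequality shows $N$ is a submodule and that the induced form on $(A \odot B)/N$ is a genuine $B$-valued inner product; completing with respect to $\|x\| = \|\langle x, x\rangle\|_B^{1/2}$ then produces a Hilbert $B$-module $X$.

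Next I would define the representation by $\sigma(a)(a' \otimes b) := (a a') \otimes b$ on $A \odot B$. Multiplicativity, unitality, and the formal adjoint relation $\langle \sigma(a) x, y\rangle = \langle x, \sigma(a^{*}) y\rangle$ on elementary tensors are all immediate. The step I expect to be the main obstacle is proving that $\sigma(a)$ is bounded --- more precisely, that $\langle \sigma(a) x, \sigma(a) x\rangle \leq \|a\|^{2}\,\langle x, x\rangle$ in $B$ --- since this single inequality is what lets $\sigma(a)$ descend to the quotient by $N$ and extend continuously to all of $X$. For this I would write $\|a\|^{2} 1 - a^{*} a = d^{*} d$ with $d \in A$, observe that $[\phi(a_i^{*} d^{*} d\, a_j)]_{i,j} = [\phi((d a_i)^{*}(d a_j))]_{i,j}$ is positive in $M_n(B)$ by complete positivity, deduce $[\phi(a_i^{*} a^{*} a\, a_j)]_{i,j} \leq \|a\|^{2}[\phi(a_i^{*} a_j)]_{i,j}$, and finally compress this matrix inequality by the row $(b_1, \dots, b_n)$. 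Once boundedness is in hand, $\sigma(a) \in \mathcal{P}(X)$ with $\sigma(a)^{*} = \sigma(a^{*})$, and $\sigma : A \to \mathcal{P}(X)$ is a unital $*$-representation.

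To finish, I would take $e \in X$ to be the class of $1_A \otimes 1_B$. Then for every $a \in A$, $\langle \sigma(a) e, e\rangle$ equals the class of $\langle a \otimes 1,\ 1 \otimes 1\rangle = \phi(a)$, which is the required dilation formula. For the density clause, note that $\sigma(a)(e \cdot b)$ is the class of $a \otimes b$, so $\{\sigma(a)(e \cdot b) : a \in A,\ b \in B\}$ spans precisely the image of $A \odot B$, which is dense in $X$ by construction. The routine loose ends --- that $N$ really is a $B$-submodule, that the completed form retains the Hilbert-module axioms, and that the right action together with the operators $\sigma(a)$ are continuous for the completion --- I would dispatch with standard $B$-valued Cauchy--Schwarz and completion arguments, without writing out the calculations.
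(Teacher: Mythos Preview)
Your proposal is correct and follows exactly the construction the paper reviews in Section~3 (drawn from Paschke's original proof): form the algebraic tensor product $A \odot B$ with the $B$-valued form $[a_1\otimes b_1,\,a_2\otimes b_2]=b_2^{*}\phi(a_2^{*}a_1)b_1$, quotient by the null submodule, complete to obtain $X$, let $\sigma$ act by left multiplication, and take $e$ to be the class of $1_A\otimes 1_B$. The paper does not itself prove this cited theorem, but your outline --- including the use of complete positivity both for positivity of the form and for the boundedness estimate $\langle\sigma(a)x,\sigma(a)x\rangle\le\|a\|^2\langle x,x\rangle$ --- matches the paper's sketch and Paschke's argument.
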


For a C*-algebra $B$ and a pre-Hilbert $B$-module $X$ define
\begin{eqnarray*}
X^\prime &:=& \{ \tau: X \rightarrow B ~|~ \tau ~ \text{is a bounded $B$-module map} \}.
\end{eqnarray*} 
For each $x \in X$ we have a map $\hat{x} \colon X \rightarrow B$ as $\hat{x}(y) := \langle y, x \rangle$. If $X^\prime = \hat{X} = \{\hat{x} ~|~ x\in X \}$, then $X$ is said to be self-dual. If $B$ is a von-Neumann algebra, then $X^\prime$ is also a Hilbert $B$-module and the $B$-valued inner product on $X$ extends to a $B$-valued inner product on $X^\prime$.

\begin{theorem} \cite[Theorem 3.2]{WP73} \label{xprime}
Let $B$ be a von-Neumann algebra and $X$ be a pre-Hilbert $B$-module. The $B$-valued inner product $\langle \cdot, \cdot \rangle$ on $X \times X$ extends to $X^\prime \times X^\prime$ in such a way as to make $X^\prime$ into a self-dual Hilbert $B$-module. Also, it satisfies $\langle \hat{x}, \tau \rangle = \tau(x)$ for $x \in X$, $\tau \in X^\prime$.
\end{theorem}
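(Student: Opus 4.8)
\medskip

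\noindent\textbf{Proof proposal.} The plan is to manufacture the $B$-valued inner product on $X^\prime$ from the scalar inner products attached to the normal states of $B$, and then to read off completeness and self-duality. The one analytic input I would isolate first is the Cauchy--Schwarz inequality for a bounded $B$-module map: if $\tau\in X^\prime$ then $\tau(x)^{*}\tau(x)\le\|\tau\|^{2}\,\langle x,x\rangle$ in $B$ for every $x\in X$. Now fix a normal state $f$ on $B$ (recall that on a von Neumann algebra the normal states separate points and their linear span is norm-dense in the self-adjoint part of the predual $B_{*}$), and let $H_{f}$ be the Hilbert space obtained from $X$ by completing with respect to the possibly degenerate pre-inner product $(x,y)\mapsto f(\langle x,y\rangle)$, with canonical contraction $x\mapsto\dot x$ of dense range. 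By the displayed inequality and Cauchy--Schwarz for the state $f$, the map $x\mapsto f(\tau(x))$ factors through $X\to H_{f}$ as a bounded functional of norm $\le\|\tau\|$, so by the Riesz lemma there is a unique $\eta_{\tau,f}\in H_{f}$ with $f(\tau(x))=\langle\dot x,\eta_{\tau,f}\rangle_{H_{f}}$ for all $x$; the assignment $\tau\mapsto\eta_{\tau,f}$ is (conjugate-)linear, and $\eta_{\hat x,f}=\dot x$.

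\smallskip

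For $\tau_{1},\tau_{2}\in X^\prime$ I would next show that $f\mapsto\langle\eta_{\tau_{1},f},\eta_{\tau_{2},f}\rangle_{H_{f}}$, defined a priori on normal states and extended by polarisation, is a bounded normal functional on $B$; the nontrivial point is additivity/normality in $f$, which I would obtain from the canonical contractions $H_{f+g}\to H_{f}$ and $H_{f+g}\to H_{g}$ compatible with the maps $x\mapsto\dot x$ and carrying $\eta_{\tau,f+g}$ to $\eta_{\tau,f}$ and $\eta_{\tau,g}$. Granting this, there is a unique element $\langle\tau_{1},\tau_{2}\rangle\in B$ with $f(\langle\tau_{1},\tau_{2}\rangle)=\langle\eta_{\tau_{1},f},\eta_{\tau_{2},f}\rangle_{H_{f}}$ for all normal $f$. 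Sesquilinearity and positivity are inherited slotwise from the Hilbert spaces $H_{f}$; positive-definiteness holds since $\langle\tau,\tau\rangle=0$ forces $\eta_{\tau,f}=0$ for every $f$, hence $f(\tau(x))=0$ for all $x,f$, hence $\tau=0$; and the module-map property of $\tau_{2}$ gives $\langle\tau_{1},\tau_{2}b\rangle=\langle\tau_{1},\tau_{2}\rangle b$, so $X^\prime$ becomes a Hilbert $B$-module. Taking $\tau_{1}=\hat x$ and using $\eta_{\hat x,f}=\dot x$ gives $f(\langle\hat x,\tau\rangle)=f(\tau(x))$ for every normal $f$, whence $\langle\hat x,\tau\rangle=\tau(x)$; in particular $\langle\hat x,\hat y\rangle=\langle x,y\rangle$, so $x\mapsto\hat x$ is an isometric module embedding of $X$ into $X^\prime$ extending the inner product.

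\smallskip

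Completeness: if $(\tau_{n})$ is Cauchy for $\|\tau\|_{X^\prime}:=\|\langle\tau,\tau\rangle\|^{1/2}$, then by Cauchy--Schwarz $(\tau_{n}(x))$ is Cauchy in $B$ for each $x$, its pointwise limit is again a bounded $B$-module map and is the $\|\cdot\|_{X^\prime}$-limit, and one checks $\|\cdot\|_{X^\prime}$ agrees with the operator norm on $X^\prime$. For self-duality, let $\Phi\in(X^\prime)^\prime$. Restricting $\Phi$ along $x\mapsto\hat x$ produces a bounded $B$-module map $X\to B$, i.e.\ an element $\tau_{0}\in X^\prime$, and I claim $\Phi=\widehat{\tau_{0}}$, that is, $\Phi(\tau)=\langle\tau,\tau_{0}\rangle$ for all $\tau\in X^\prime$. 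Both sides are bounded $B$-module maps $X^\prime\to B$ that agree on $\widehat{X}$; to finish I would test against a normal state $f$ and use that a bounded functional $\tau\mapsto\langle\eta_{\tau,f},\xi\rangle_{H_{f}}$ on $X^\prime$ is determined by its restriction to $\widehat{X}$, because $\{\dot x:x\in X\}$ is dense in $H_{f}$. Both $f\circ\Phi$ and $f\circ\widehat{\tau_{0}}$ have this form and agree on $\widehat{X}$ by construction of $\tau_{0}$, so they coincide, and since normal states separate points of $B$, $\Phi=\widehat{\tau_{0}}$.

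\smallskip

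The step I expect to be the main obstacle is the normality assertion in the second paragraph --- that $f\mapsto\langle\eta_{\tau_{1},f},\eta_{\tau_{2},f}\rangle_{H_{f}}$ is a normal functional on $B$ --- since this is exactly the point at which the von Neumann algebra hypothesis is genuinely used, and it requires controlling how the Riesz vectors $\eta_{\tau,f}$ vary with $f$ and patching the family $\{H_{f}\}$ together coherently; the same coherence underlies the density argument for self-duality. By contrast, completeness and the algebraic verifications involving the module action are routine once the inner product is in place.
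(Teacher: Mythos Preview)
The paper does not actually prove this statement: Theorem~\ref{xprime} is quoted in the preliminaries section as \cite[Theorem~3.2]{WP73} without proof, so there is no ``paper's own proof'' to compare against. What you have written is, in outline, precisely Paschke's original argument from \cite{WP73}: pass to the Hilbert spaces $H_{f}$ attached to normal states $f$ of $B$, represent each $\tau\in X'$ by a Riesz vector $\eta_{\tau,f}\in H_{f}$, and recover $\langle\tau_{1},\tau_{2}\rangle\in B$ from the family $f\mapsto\langle\eta_{\tau_{1},f},\eta_{\tau_{2},f}\rangle_{H_{f}}$. You have also correctly flagged the crux, namely that this assignment is $\sigma$-weakly continuous (equivalently, extends to an element of $B_{*}$); this is exactly where the von Neumann algebra hypothesis enters in Paschke's proof.

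One small point worth tightening in your self-duality paragraph: you assert that $f\circ\Phi$ has the form $\tau\mapsto\langle\eta_{\tau,f},\xi\rangle_{H_{f}}$ for some $\xi$, but you do not justify why $f\circ\Phi$ factors through the map $\tau\mapsto\eta_{\tau,f}$. The missing step is the module Cauchy--Schwarz inequality applied to $\Phi$ on the (already constructed) Hilbert $B$-module $X'$: from $\Phi(\tau)^{*}\Phi(\tau)\le\|\Phi\|^{2}\langle\tau,\tau\rangle$ one gets $|f(\Phi(\tau))|^{2}\le\|\Phi\|^{2}f(\langle\tau,\tau\rangle)=\|\Phi\|^{2}\|\eta_{\tau,f}\|^{2}$, so $f\circ\Phi$ descends to a bounded functional on the range of $\tau\mapsto\eta_{\tau,f}$, hence on $H_{f}$, and Riesz produces the required $\xi$. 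With that addition your sketch matches Paschke's proof.
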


Since $X$ is a submodule of $X^\prime$, the next proposition extends an adjointable operator on $X$ to an adjointable operator on $X^\prime$:

\begin{proposition} \cite[Corollary 3.7]{WP73} \label{pext}
Let $B$ be a von-Neumann algebra and $X$ be a pre-Hilbert $B$-module, then each $T \in \mathcal{P}(X)$ extends to a unique $\tilde{T} \in \mathcal{P}(X^\prime)$. The map $T \mapsto \tilde{T}$ is a *-isomorphism of $\mathcal{P}(X)$ into $\mathcal{P}(X^\prime)$.
\end{proposition}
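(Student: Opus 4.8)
The plan is to exhibit the extension by an explicit formula and then check the asserted properties one at a time. Fix $T \in \mathcal{P}(X)$ with adjoint $T^* \in \mathcal{P}(X)$. If a $\tilde{T} \in \mathcal{P}(X')$ extending $T$ is to exist with adjoint extending $T^*$, then for $\tau \in X'$ and $x \in X$ one is forced (using $\langle \hat{x}, \mu\rangle = \mu(x)$ from Theorem \ref{xprime}) to have $(\tilde{T}\tau)(x) = \langle \hat{x}, \tilde{T}\tau\rangle = \langle \widehat{T^*x}, \tau\rangle = \tau(T^*x)$. Accordingly I would \emph{define} $\tilde{T}\tau := \tau \circ T^*$. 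Since $\tau$ and $T^*$ are bounded $B$-module maps, so is $\tau \circ T^*$; thus $\tilde{T}\tau \in X'$ and $\|\tilde{T}\| \le \|T\|$. A direct computation from $\hat{x}(y) = \langle y, x\rangle$ gives $\tilde{T}\hat{x} = \widehat{Tx}$, so $\tilde{T}$ extends $T$ under the embedding $X \hookrightarrow X'$. Moreover $T \mapsto \tilde{T}$ is clearly linear and unital, $\widetilde{T^*}$ is the bounded $B$-module endomorphism $\tau \mapsto \tau \circ T$ of $X'$, and $\widetilde{TS} = \tilde{T}\tilde{S}$ because $\widetilde{TS}\tau = \tau \circ (TS)^* = \tau \circ S^* \circ T^* = \tilde{T}(\tilde{S}\tau)$.

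The step I expect to carry the weight is the adjointability of $\tilde{T}$, and it is exactly here that the hypothesis that $B$ is a von Neumann algebra enters (through the self-duality of $X'$) in place of a density argument, since $\widehat{X}$ need not be norm-dense in $X'$. First I would record the elementary consequence of Theorem \ref{xprime} that a bounded $B$-module map $F \colon X' \to B$ is determined by its restriction to $\widehat{X}$: self-duality gives $F = \langle \cdot, \eta\rangle$ for a unique $\eta \in X'$, and the values $F(\hat{x}) = \langle \hat{x}, \eta\rangle = \eta(x)$ recover $\eta$ as a map $X \to B$, hence $\eta$, hence $F$. Now fix $\tau \in X'$: a direct computation using $\langle \hat{x}, \mu\rangle = \mu(x)$ and $\widetilde{T^*}\hat{x} = \widehat{T^*x}$ shows that the bounded $B$-module maps $\sigma \mapsto \langle \tilde{T}\tau, \sigma\rangle$ and $\sigma \mapsto \langle \tau, \widetilde{T^*}\sigma\rangle$ on $X'$ agree on $\widehat{X}$, hence on all of $X'$. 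This yields $\langle \tilde{T}\tau, \sigma\rangle = \langle \tau, \widetilde{T^*}\sigma\rangle$ for all $\sigma, \tau \in X'$, so $\tilde{T} \in \mathcal{P}(X')$ with $(\tilde{T})^* = \widetilde{T^*}$.

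The uniqueness of the extension rests on the same orthogonality principle: if $S \in \mathcal{P}(X')$ also extends $T$, then $D := S - \tilde{T} \in \mathcal{P}(X')$ vanishes on $\widehat{X}$, so $\langle D^*\hat{x}, \hat{y}\rangle = \langle \hat{x}, D\hat{y}\rangle = 0$ for all $y$, forcing $D^*\hat{x} = 0$ (an element of $X'$ orthogonal to every $\hat{y}$ is zero, since $\langle \hat{y}, \mu\rangle = \mu(y)$); then $\langle D\sigma, \hat{x}\rangle = \langle \sigma, D^*\hat{x}\rangle = 0$ for all $x$, forcing $D\sigma = 0$, i.e.\ $D = 0$. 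Finally, the identities above make $T \mapsto \tilde{T}$ a unital *-homomorphism between the C*-algebras $\mathcal{P}(X)$ and $\mathcal{P}(X')$, injective because $\tilde{T} = 0$ implies $\widehat{Tx} = \tilde{T}\hat{x} = 0$, whence $Tx = 0$, for all $x$; since an injective *-homomorphism of C*-algebras is isometric, $T \mapsto \tilde{T}$ is a *-isomorphism of $\mathcal{P}(X)$ onto its image in $\mathcal{P}(X')$, as claimed. The single delicate point is the self-duality input used to pass from $\widehat{X}$ to all of $X'$.
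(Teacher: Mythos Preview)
The paper does not prove this proposition at all; it is quoted verbatim as a preliminary result from Paschke \cite[Corollary~3.7]{WP73}, so there is no in-paper proof to compare against. Your argument is essentially the standard one (define $\tilde{T}\tau=\tau\circ T^*$, use self-duality of $X'$ to pass from $\widehat{X}$ to $X'$) and is correct in substance.

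One small imprecision is worth flagging. With the paper's convention the $B$-valued inner product is $B$-linear in the first variable and conjugate-$B$-linear in the second, so for fixed $\tau$ the maps $\sigma\mapsto\langle \tilde{T}\tau,\sigma\rangle$ and $\sigma\mapsto\langle \tau,\widetilde{T^*}\sigma\rangle$ are \emph{conjugate}-$B$-linear in $\sigma$, not bounded $B$-module maps, and your determination-by-restriction principle was stated for the latter. The repair is immediate: either replace these by their adjoints $\sigma\mapsto\langle\sigma,\tilde{T}\tau\rangle$ and $\sigma\mapsto\langle\widetilde{T^*}\sigma,\tau\rangle$, which \emph{are} $B$-module maps $X'\to B$, or fix $\sigma$ and vary $\tau$ instead (then $\tau\mapsto\langle\tilde{T}\tau,\sigma\rangle$ is genuinely $B$-linear). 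Your verification on $\widehat{X}$ and the self-duality step then go through unchanged, and the rest of the proof (multiplicativity, uniqueness via $D^*\hat x=0$, injectivity giving isometry) is fine as written.
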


Let $A$ be a unital C*-algebra, $B$ be a von Neumann algebra and $\phi:A \rightarrow B$ is a completely positive map. By Theorem \ref{pd} and Proposition \ref{pext} one defines a *-representation $\tilde{\sigma}: A \rightarrow \mathcal{P}(X^\prime)$ by $\tilde{\sigma}(a) := \widetilde{\sigma(a)}$. For $T \in \mathcal{P}(X^\prime)$, define a linear map $\phi_T : A \rightarrow B$ by $\phi_T(a) := \langle T \tilde{\sigma}(a) \hat{e}, \hat{e} \rangle$ for $a \in A$. Then the following Radon--Nikodym type result is available in this Hilbert module setting:

\begin{theorem} \cite[Proposition 5.4]{WP73} \label{prnd}
The map $T \mapsto \phi_T$ is an affine order isomorphism of $\{ T \in  \tilde{\sigma}(A)^\prime \hspace{5pt}| \hspace{5pt} 0\leq T \leq 1_{X^\prime} \}$ onto $$[0, \phi] = \{ \theta \in CP(A, B) \hspace{5pt} | \hspace{5pt} \theta \leq \phi \}.$$
\end{theorem}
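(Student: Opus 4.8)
The plan is to transport Arveson's operator-theoretic proof of Theorem~\ref{rnd} to the Hilbert module setting, using the self-dual completion $X'$ in place of the Stinespring Hilbert space and Paschke's structure theory of self-dual modules (Theorems~\ref{xprime} and~\ref{pext}) in place of the elementary extension of a bounded operator from a dense subspace. First I would verify that $\phi_T \in [0,\phi]$ whenever $T \in \tilde\sigma(A)'$ and $0 \le T \le 1_{X'}$: since $T$ commutes with $\tilde\sigma(A)$, so do $T^{1/2}$ and $(1_{X'}-T)^{1/2}$ by functional calculus in $\mathcal{P}(X')$, whence $\phi_T(a) = \langle \tilde\sigma(a)(T^{1/2}\hat{e}),\, T^{1/2}\hat{e}\rangle$ and $(\phi-\phi_T)(a) = \langle \tilde\sigma(a)((1_{X'}-T)^{1/2}\hat{e}),\,(1_{X'}-T)^{1/2}\hat{e}\rangle$ are both of Paschke form, hence completely positive. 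Affinity of $T \mapsto \phi_T$ is immediate from bilinearity of the inner product, and the same computation applied to $T_2 - T_1 \ge 0$ shows the map is order preserving.

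\emph{Injectivity.} Set $E := \mathrm{span}\{\sigma(a)(e\cdot b) : a\in A,\ b\in B\}$, which is norm dense in $X$ by Theorem~\ref{pd}, so that $\widehat{E} = \{\tilde\sigma(a)(\hat{e}\cdot b)\}$ is norm dense in $\widehat{X}$. If $S := T_1 - T_2$ is self-adjoint in $\tilde\sigma(A)'$ with $\phi_S = 0$, then for $u,v \in \widehat{E}$ one computes $\langle Su, v\rangle$ to be a $B$-linear combination of values $\phi_S((a_j')^*a_i)$, all of which vanish, using that $S$ commutes with $\tilde\sigma(A)$. By density, $\langle S\hat{x}, \hat{y}\rangle = 0$ for all $x,y \in X$, so by Theorem~\ref{xprime} the module map $S\hat{x} \in X'$ vanishes on $X$, i.e. $S\hat{x} = 0$ for every $x$; finally $(S\tau)(x) = \langle \hat{x}, S\tau\rangle = \langle S\hat{x}, \tau\rangle = 0$ for all $\tau \in X'$ and $x \in X$, so $S = 0$ and $T_1 = T_2$.

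\emph{Surjectivity.} Given $\theta \in [0,\phi]$, apply Theorem~\ref{pd} to $\theta$ to get its Paschke dilation $(X_\theta, \sigma_\theta, e_\theta)$ with dense submodule $E_\theta$, and pass to $X_\theta'$ with extended representation $\tilde\sigma_\theta$; write $(X_\phi,\sigma_\phi,e_\phi,X_\phi',\tilde\sigma_\phi)$ for the corresponding data of $\phi$. On $\widehat{E_\phi}$ define $W_0$ by $W_0\bigl(\sum_i \tilde\sigma_\phi(a_i)(\hat{e}_\phi\cdot b_i)\bigr) := \sum_i \tilde\sigma_\theta(a_i)(\hat{e}_\theta\cdot b_i) \in X_\theta'$. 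A direct computation expresses $\langle u, u\rangle$ and $\langle W_0 u, W_0 u\rangle$ as $\beta^*\phi^{(n)}(w^*w)\beta$ and $\beta^*\theta^{(n)}(w^*w)\beta$ respectively, for a suitable row vector $w$ over $A$ and column vector $\beta$ over $B$ assembled from the $a_i$ and $b_i$; since complete positivity of $\theta$ and of $\phi-\theta$ gives $0 \le \theta^{(n)}(w^*w) \le \phi^{(n)}(w^*w)$ in $M_n(B)$, we get $0 \le \langle W_0 u, W_0 u\rangle \le \langle u, u\rangle$. Hence $W_0$ is well-defined and contractive; it extends by norm-continuity to $\widehat{X_\phi}$, and then by the universal property of the self-dual completion \cite{WP73} to a contractive $B$-module map $W \colon X_\phi' \to X_\theta'$, which is adjointable, being a bounded module map into a self-dual module over a von Neumann algebra. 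Checking on $\widehat{E_\phi}$ yields the intertwining relation $W\tilde\sigma_\phi(a) = \tilde\sigma_\theta(a)W$ and $W\hat{e}_\phi = \hat{e}_\theta$ (take $a = 1_A$, $b = 1_B$). Then $T := W^*W \in \mathcal{P}(X_\phi')$ satisfies $0 \le T \le 1_{X_\phi'}$; the intertwining relation and its adjoint give $T \in \tilde\sigma_\phi(A)'$; and $\phi_T(a) = \langle W\tilde\sigma_\phi(a)\hat{e}_\phi,\, W\hat{e}_\phi\rangle = \langle \tilde\sigma_\theta(a)\hat{e}_\theta,\, \hat{e}_\theta\rangle = \theta(a)$.

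\emph{Order isomorphism, and the main obstacle.} To upgrade the order-preserving bijection to an order isomorphism, given $\theta_1 \le \theta_2$ in $[0,\phi]$ with $\theta_i = \phi_{T_i}$, I would run the surjectivity construction on the pair $\theta_1 \le \theta_2$ to obtain a contraction $V \colon X_{\theta_2}' \to X_{\theta_1}'$, observe on $\widehat{E_\phi}$ that $W_1 = V W_2$ for the intertwiners $W_i \colon X_\phi' \to X_{\theta_i}'$ of the previous step, and conclude $T_1 = W_1^*W_1 = W_2^*(V^*V)W_2 \le W_2^*W_2 = T_2$ since $V^*V \le 1_{X_{\theta_2}'}$. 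I expect the genuine difficulty to lie not in any single estimate but in the two layers of density in the surjectivity step: $E_\phi$ is only norm dense in $X_\phi$, and $X_\phi$ in turn is a proper (weakly dense) submodule of $X_\phi'$, so the construction of $W$ and the verification of all its properties (intertwining, $W\hat{e}_\phi = \hat{e}_\theta$, adjointability) must be carried out on $\widehat{E_\phi}$ and propagated outward through Paschke's machinery — exactly the point where the module-theoretic constraints make the argument subtler than Arveson's Hilbert space original.
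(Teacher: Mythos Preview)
The paper does not contain a proof of this statement: Theorem~\ref{prnd} appears in the preliminaries section purely as a citation of Paschke's original result \cite[Proposition~5.4]{WP73}, and is invoked as a black box in the proof of Theorem~\ref{mt2}. There is therefore no proof in the paper to compare your proposal against.

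That said, your proposal is a faithful reconstruction of the standard argument (and essentially of Paschke's own): transport Arveson's Radon--Nikodym theorem to the module setting by replacing the Stinespring Hilbert space with the self-dual completion $X'$, using functional calculus in $\mathcal{P}(X')$ for the range, the density of $\{\sigma(a)(e\cdot b)\}$ for injectivity, and the contraction $W$ built from the inequality $\theta^{(n)} \le \phi^{(n)}$ for surjectivity. Your identification of the delicate point is also accurate: the two-stage extension of $W_0$ from $\widehat{E_\phi}$ through $\widehat{X_\phi}$ to all of $X_\phi'$, and the verification that the resulting $W$ is adjointable, is exactly where Paschke's structure theory for self-dual modules over a von Neumann algebra (specifically that bounded module maps into a self-dual module admit adjoints, and that bounded module maps on $X$ extend to $X'$) does the work that elementary density does in Arveson's Hilbert-space proof. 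One small remark: your claim that $W$ is adjointable ``being a bounded module map into a self-dual module'' is correct, but the reason is that the \emph{domain} $X_\phi'$ is self-dual, so for each $\eta \in X_\theta'$ the functional $\xi \mapsto \langle W\xi,\eta\rangle$ on $X_\phi'$ is represented by some $W^*\eta \in X_\phi'$; the self-duality of the codomain alone would not suffice.
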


\section{Extreme points and barycentric decompositions} \label{sec;EP}
The main goal of this section is to characterize extreme points of the compact convex set $\rm{WE}(\pi)$ for an arbitrary but fixed representation $\pi$ of a unital C*-algebra $A$. Before embarking on the main result of this section, we prove two technical lemmas which are key to the arguments presented in the main result (Theorem \ref{mt2}). 

Let $\pi: A \rightarrow B(H)$ be a non-degenerate representation of a unital C*-algebra $A$ and $\phi: B(H) \rightarrow \pi(A)^{\prime\prime} \subseteq B(H)$ be a weak expectation of $\pi$. Let $\phi = V^* \rho V$ be the minimal Stinespring dilation of $\phi$, where $V$ is a bounded map from $H$ to $K$ and $\rho$ is representation of $B(H)$ on $K$. Here we briefly recall a few details and notations about the minimal Stinespring dilation which we will be using throughout the article. The Hilbert space $K$ is constructed by defining a bilinear function $\la \cdot ~, \cdot \ra$ on the algebraic tensor product $B(H) \otimes H$ as
\begin{equation*}
\la S_1 \otimes h_1, S_2 \otimes h_2 \ra := \la \phi(S_2^*S_1)h_1, h_2 \ra.
\end{equation*}
and extending linearly. Since $\phi$ is completely positive, $\la \cdot ~, \cdot \ra$ is positive semidefinite. Consider the null space
\begin{equation*}
N := \{u \in B(H) \otimes H :  \la u, u \ra = 0 \}.
\end{equation*} 
Let $K_0 = (B(H) \otimes H) / N$. Define a bilinear function on the quotient space $K_0$ by $\la \cdot ~, \cdot \ra$ (again by using the same notation) as
\begin{equation*}
\la u_1 + N, u_2 + N \ra := \la u_1, u_2 \ra.
\end{equation*}
Then $K_0$ is an inner product space with respect to $\la \cdot ~, \cdot \ra$ and the Hilbert space $K$ is obtained by completing $K_0$. The operator $V : H \rightarrow K$ is defined by 
\begin{equation*}
V(h) := 1_H \otimes h + N.
\end{equation*}
Finally the *-homomorphism $\rho : B(H) \rightarrow B(K)$ is defined on the dense subspace $K_0$ by
\begin{equation*}
\rho(S) \left (\sum_{i=1}^{k} S_i \otimes h_i + N \right ) := \sum_{i=1}^{k} SS_i \otimes h_i + N
\end{equation*}
and then extending the operator $\rho(S)$ to $K$.

Now we state the first technical lemma. Before this we clarify some notations. For any given $h \in H$, we see that $1_H \otimes h + N \in K$. Now, let $T\in B(K)$. We write the vector $T(1_H \otimes h + N) \in K$ in the form:
\begin{equation*}
T(1_H \otimes h + N) = \lim_{\alpha} \left( \sum_{i=1}^{n_\alpha} S^h_{\alpha_i} \otimes h_{\alpha_i} + N \right)
\end{equation*}
for some \textit{representative net} $\left \{ \sum\limits_{i=1}^{n_\alpha} S^h_{\alpha_i} \otimes h_{\alpha_i} + N \right \}_\alpha \subseteq K_0$.

\begin{lemma} \label{ml1}
Let $A$ be a unital C*-algebra and $\pi : A \rightarrow B(H)$ be a non-degenerate representation of $A$. Let $\phi \in \rm{WE}(\pi)$ and $V^* \rho V$ be the minimal Stinespring dilation of $\phi$ on the Hilbert space $K$. Suppose $\phi_T = V^* T \rho V \leq \phi$ where $T \in \rho(B(H))^\prime$ and $0 \leq T \leq  1_K$. Then there exists $0 < t <  1$ and $\theta \in \rm{WE}(\pi)$ such that $\phi_T = t \theta$ if and only if the following property is satisfied: for an arbitrary but fixed $h \in H$ and for any representation of the vector $T(1_H \otimes h + N)$ given by
\begin{equation*}
T(1_H \otimes h + N) = \lim_{\alpha} \left( \sum_{i=1}^{n_\alpha} S^h_{\alpha_i} \otimes h_{\alpha_i} + N \right)
\end{equation*}  
one has 
\begin{enumerate}
\item $\lim\limits_\alpha \sum\limits_{i=1}^{n_\alpha} \phi (S^h_{\alpha_i}) h_{\alpha_i} = th$  \hspace{3pt} \text{for some} \hspace{3pt} $0 < t < 1$;
\item for any $R \in \pi(A)^\prime$, the vector of the form $T(1_H \otimes R h + N)$ is represented by $T(1_H \otimes R h + N) = \underset{\alpha}{\lim} \sum\limits_{i=1}^{n_\alpha} S^h_{\alpha_i} \otimes R h_{\alpha_i} + N.$
\end{enumerate} 
\end{lemma}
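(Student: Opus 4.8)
The plan is to work with the concrete map $\phi_T=V^*T\rho(\cdot)V$ (with $T\in\rho(B(H))^\prime$, $0\le T\le 1_K$, which by Theorem \ref{rnd} exhausts $[0,\phi]$) and to match the candidate $\theta:=t^{-1}\phi_T$ against the three requirements for membership in $\rm{WE}(\pi)$: it must be unital, it must fix $\pi(A)$, and it must have range inside $\pi(A)^{\prime\prime}$. Complete positivity of $\theta$ is automatic, since $\phi_T(S)=(T^{1/2}V)^*\rho(S)(T^{1/2}V)$. So the whole content is to recognize that (1) encodes unitality of $\theta$ (and then $\theta\circ\pi=\pi$ comes for free), while (2) encodes ``range in $\pi(A)^{\prime\prime}$''. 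Throughout I would use the two standard facts $V^*(S\otimes h+N)=\phi(S)h$ and $V^*\rho(\pi(a))=\pi(a)V^*$, the latter because $\pi(a)$ lies in the multiplicative domain of $\phi$ (as $\phi$ is unital completely positive with $\phi(\pi(a))=\pi(a)$), so that $\phi(\pi(a)S)=\pi(a)\phi(S)$.

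The one genuinely nontrivial ingredient, which I expect to be the main obstacle, is the construction, for each $R\in\pi(A)^\prime$, of a \emph{bounded} ``right multiplication'' operator $W_R\in B(K)$ determined on the dense subspace $K_0$ by $W_R(S\otimes h+N):=S\otimes Rh+N$. Boundedness is where the weak expectation hypothesis is used in an essential way: for $u=\sum_{i=1}^n S_i\otimes h_i$ one has $\|u\|_K^2=\langle P\mathbf h,\mathbf h\rangle$ with $\mathbf h=(h_i)_i\in H^{\oplus n}$ and $P=(\phi(S_j^*S_i))_{i,j}\ge 0$ in $M_n(B(H))$; since $\phi$ takes values in $\pi(A)^{\prime\prime}$, every entry of $P$ lies in $\pi(A)^{\prime\prime}$, hence $R^{(n)}:=R\oplus\cdots\oplus R$ commutes with $P$, and therefore $\|\sum_i S_i\otimes Rh_i\|_K^2=\langle (R^{(n)})^*R^{(n)}P\,\mathbf h,\mathbf h\rangle\le\|R\|^2\|u\|_K^2$. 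This makes $W_R$ well defined, bounded, with $\|W_R\|\le\|R\|$; one then records the routine facts $W_R\in\rho(B(H))^\prime$, $W_R^*=W_{R^*}$ (using that $R$ commutes with the range of $\phi$) and $W_RV=VR$, $RV^*=V^*W_R$. Without $W_R$ one cannot even assert that the net appearing in (2) converges, so this operator is the crux.

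With $W_R$ in hand I would run three short computations, using only $T\in\rho(B(H))^\prime$, $V^*(S\otimes h+N)=\phi(S)h$, $V^*\rho(\pi(a))=\pi(a)V^*$, and totality of $\{\rho(S)Vh\}$ in $K$: first, for any representative net $TVh=\lim_\alpha\sum_i S^h_{\alpha_i}\otimes h_{\alpha_i}+N$, applying the bounded map $V^*$ gives $\phi_T(1_H)h=V^*TVh=\lim_\alpha\sum_i\phi(S^h_{\alpha_i})h_{\alpha_i}$, so $\theta$ is unital (i.e.\ $\phi_T(1_H)=t\,1_H$) iff (1) holds; second, $\phi_T(\pi(a))=\pi(a)\phi_T(1_H)$, so once (1) holds $\phi_T(\pi(a))=t\pi(a)$ and $\theta\circ\pi=\pi$ is automatic; third, for $S\in B(H)$ and $R\in\pi(A)^\prime$ the identities above yield $\phi_T(S)R-R\phi_T(S)=V^*\rho(S)\,[T,W_R]\,V$, so (by totality of $\{\rho(S)Vk\}$) $\phi_T(S)$ commutes with every $R\in\pi(A)^\prime$ for all $S$ iff $[T,W_R]V=0$ for all $R$; and $[T,W_R]V=0$ says exactly $T(1_H\otimes Rh+N)=TVRh=W_R(TVh)=\lim_\alpha\sum_i S^h_{\alpha_i}\otimes Rh_{\alpha_i}+N$, which is precisely (2). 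Hence $\theta$ has range in $\pi(A)^{\prime\prime}$ iff (2) holds.

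Assembling: if (1) and (2) hold, then $\phi_T(1_H)=t\,1_H$ with $0<t<1$, so $\theta=t^{-1}\phi_T$ is unital completely positive, fixes $\pi(A)$ by the second computation, and maps into $\pi(A)^{\prime\prime}$ by the third; thus $\theta\in\rm{WE}(\pi)$ and $\phi_T=t\theta$. Conversely, if $\phi_T=t\theta$ with $0<t<1$ and $\theta\in\rm{WE}(\pi)$, then unitality of $\theta$ forces $\phi_T(1_H)=t\,1_H$ and hence (1) via the first computation, and range$(\theta)\subseteq\pi(A)^{\prime\prime}$ forces (2) via the third. The only other point requiring care is the routine remark that the limits asserted in (1) and (2) really exist: $\sum_i\phi(S^h_{\alpha_i})h_{\alpha_i}$ is $V^*$ applied to a convergent net, and $\sum_i S^h_{\alpha_i}\otimes Rh_{\alpha_i}+N$ is $W_R$ applied to one, so both converge by continuity.
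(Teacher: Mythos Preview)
Your proof is correct and takes a genuinely different, more structural route than the paper. The paper argues (2) by direct inner-product calculations: assuming $\phi_T(S)R=R\phi_T(S)$ (because $\mathrm{ran}\,\phi_T\subseteq\pi(A)''$), it expands both sides through the representing net for $TVh$, pushes $R$ across $\phi(SS^h_{\alpha_i})$ using $\mathrm{ran}\,\phi\subseteq\pi(A)''$, and then matches inner products against every $S^*\otimes h'+N$ to identify $T(1_H\otimes Rh+N)$. You instead package the range condition $\mathrm{ran}\,\phi\subseteq\pi(A)''$ into the bounded right-multiplication operator $W_R\in\rho(B(H))'$ on $K$ and reduce (2) to the commutator identity $[T,W_R]V=0$. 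This buys two things the paper does not make explicit: the convergence of the net $\sum_i S^h_{\alpha_i}\otimes Rh_{\alpha_i}+N$ in (2) is automatic (it is $W_R$ applied to a norm-convergent net), whereas the paper's inner-product matching literally only establishes weak convergence to $T(1_H\otimes Rh+N)$; and the equivalence ``$\mathrm{ran}\,\phi_T\subseteq\pi(A)''\ \Leftrightarrow$ (2)'' becomes a transparent operator statement rather than a vector-by-vector verification. The paper's approach, in return, avoids constructing any auxiliary operator and stays entirely at the level of elementary tensors. Underneath, both proofs rest on the same two algebraic facts you isolate at the outset: $\phi(\pi(a)S)=\pi(a)\phi(S)$ from the multiplicative domain, and $R\phi(S)=\phi(S)R$ for $R\in\pi(A)'$ from the weak-expectation range hypothesis.
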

\begin{proof}
Let $\phi \in \rm{WE}(\pi)$ and $V^* \rho V$ be the minimal Stinespring dilation of $\phi$. Suppose $\phi_T = V^* T \rho V \leq \phi$ where $T \in \rho(B(H))^\prime$ such that $0 \leq T \leq  1_K$. If there exists $0 < t < 1$ and $\theta \in \rm{WE}(\pi)$ such that $\phi_T = t \theta$, then $\phi_T(\pi(a)) = t \pi(a)$, for all $a \in A$. Therefore for an arbitrary but fixed $h \in H$ we have:
\begin{eqnarray*} 
\phi_T(\pi(a))h = V^*  \rho(\pi(a)) T V h &=& V^* \lim_{\alpha} \sum_{i=1}^{n_\alpha} \pi(a) S^h_{\alpha_i} \otimes h_{\alpha_i} + N  \\
&=&  \lim_{\alpha} \sum_{i=1}^{n_\alpha} \phi (\pi(a) S^h_{\alpha_i}) h_{\alpha_i} \\
&=& \pi(a) \lim_{\alpha} \sum_{i=1}^{n_\alpha} \phi (S^h_{\alpha_i}) h_{\alpha_i} \\
&=& \pi(a) t h.		
\end{eqnarray*} 
This is true for all $a \in A$ and as $\pi$ is a non-degenerate representation we get,
\begin{equation*}
\lim\limits_\alpha \sum\limits_{i=1}^{n_\alpha} \phi (S^h_{\alpha_i}) h_{\alpha_i} = t h.
\end{equation*}  
This proves the first equation.
	
Now we prove the second equation. Consider an operator $R$ in $\pi(A)^\prime$ and any $S$ in $B(H)$, then we have
\begin{eqnarray*}
V^*  \rho(S) T V R h &=& R V^*  \rho(S) T V h \\
&=&  R V^*  \rho(S) T (1_H \otimes h + N) \\
&=&  R V^* \lim_{\alpha} \sum_{i=1}^{n_\alpha} S S^h_{\alpha_i} \otimes h_{\alpha_i} + N \\
&=&  R \lim_{\alpha} V^* \sum_{i=1}^{n_\alpha} S S^h_{\alpha_i} \otimes h_{\alpha_i} + N \\
&=&  R \lim_{\alpha} \sum_{i=1}^{n_\alpha} \phi (S S^h_{\alpha_i}) h_{\alpha_i} \\
&=&  \lim_{\alpha} \sum_{i=1}^{n_\alpha} \phi (S S^h_{\alpha_i})  Rh_{\alpha_i} \\
&=&  \lim_{\alpha} V^* \sum_{i=1}^{n_\alpha} S S^h_{\alpha_i} \otimes Rh_{\alpha_i} + N .
\end{eqnarray*} 
Therefore, for $h^\prime \in H$, we have:
\begin{eqnarray*}
\left \langle V^* \rho(S) T V R h,~ h^\prime \right \rangle  &=& \left \langle V^*  \rho(S) T (1_H \otimes Rh + N), ~h^\prime \right \rangle \\
&=& \left \langle T (1_H \otimes Rh + N), ~S^* \otimes h^\prime + N \right \rangle 	
\end{eqnarray*}		
and
\begin{eqnarray*}
\left \langle R V^*  \rho(S) T V h, ~ h^\prime \right \rangle &=& \left \langle \lim_{\alpha} V^* \sum_{i=1}^{n_\alpha} S S^h_{\alpha_i} \otimes Rh_{\alpha_i} + N, ~ h^\prime \right \rangle \\
&=& \lim_{\alpha} \left \langle   \sum_{i=1}^{n_\alpha} S S^h_{\alpha_i} \otimes Rh_{\alpha_i} + N, ~ 1_H \otimes h^\prime + N \right \rangle \\
&=& \lim_{\alpha} \left \langle   \rho (S) \sum_{i=1}^{n_\alpha}  S^h_{\alpha_i} \otimes Rh_{\alpha_i} + N, ~ 1_H \otimes h^\prime + N \right \rangle \\
&=& \left \langle  \lim_{\alpha} \sum_{i=1}^{n_\alpha}  S^h_{\alpha_i} \otimes Rh_{\alpha_i} + N, ~ S^* \otimes h^\prime + N \right \rangle .		
\end{eqnarray*}
This is true for all $S \in B(H)$ and $h^\prime \in H$ hence, 
$$T(1_H \otimes R h + N) = \lim_{\alpha} \sum_{i=1}^{n_\alpha} S^h_{\alpha_i} \otimes R h_{\alpha_i} + N.$$

Now to prove the converse, let $T \in \rho(B(H))^\prime$ with $0 \leq T \leq  1_K.$ Suppose $h \in H$ be an arbitrary but fixed vector and $\left \{ \sum\limits_{i=1}^{n_\alpha} S^h_{\alpha_i} \otimes h_{\alpha_i} + N \right \}_\alpha$ be any representing net corresponding to $T(1_H \otimes h + N)$. If one has 		
\begin{enumerate}
\item $\underset{\alpha}{\lim} \sum\limits_{i=1}^{n_\alpha} \phi (S^h_{\alpha_i}) h_{\alpha_i} = th$ \hspace{3pt} \text{for some} \hspace{3pt} $0 < t < 1$;
\item for any $R \in \pi(A)^\prime$, the vector of the form $T(1_H \otimes R h + N)$ is represented by $T(1_H \otimes R h + N) = \underset{\alpha}{\lim} \sum\limits_{i=1}^{n_\alpha} S^h_{\alpha_i} \otimes R h_{\alpha_i} + N.$	
\end{enumerate}
Then we show that there exists $\theta \in \text{WE}(\pi)$ such that $\phi_T = t \theta$.

To this end, using equation (1) above we get, for $a \in A$ and $h \in H$
\begin{eqnarray*}
\phi_T (\pi(a))h = V^* \rho (\pi(a)) T Vh &=& V^* \lim_{\alpha} \sum_{i=1}^{n_\alpha} \pi(a) S^h_{\alpha_i} \otimes h_{\alpha_i} + N  \\
&=& \lim_{\alpha} \sum_{i=1}^{n_\alpha} \phi (\pi(a) S^h_{\alpha_i}) h_{\alpha_i} \\
&=& \pi(a) \lim_{\alpha} \sum_{i=1}^{n_\alpha} \phi (S^h_{\alpha_i}) h_{\alpha_i} \\
&=& \pi(a) t h. 
\end{eqnarray*}
Since, this is true for all $h \in H$, $\phi_T (\pi(a)) = t \pi(a)$. 		
	
Now we show that the range of $\phi_T$ is contained in $\pi(A)^{\prime\prime}$. Let $R$ and $S$ be any operators in $\pi(A)^\prime$ and $B(H)$ respectively. Let $h$ be an arbitrary but fixed vector in $H.$ Then using equation (2) above and the fact that the range of $\phi$ is contained in $\pi(A)^{\prime\prime}$ we get,
\begin{eqnarray*}
\phi_T (S) R h = V^* \rho (S) T V Rh &=&  V^* \rho (S) T (1_H \otimes Rh + N)\\
&=& V^* \rho (S) \lim_{\alpha} \sum_{i=1}^{n_\alpha} S^h_{\alpha_i} \otimes Rh_{\alpha_i} + N \\
&=& V^* \lim_{\alpha} \sum_{i=1}^{n_\alpha} S S^h_{\alpha_i} \otimes Rh_{\alpha_i} + N \\
&=& \lim_{\alpha} \sum_{i=1}^{n_\alpha} \phi (S S^h_{\alpha_i}) R h_{\alpha_i} \\
&=& R \lim_{\alpha} \sum_{i=1}^{n_\alpha} \phi (S S^h_{\alpha_i})  h_{\alpha_i}	
\end{eqnarray*}
and 
\begin{eqnarray*}
R \phi_T (S) h = R V^* \rho (S) T V h &=& R V^* \rho (S) T (1_H \otimes h + N)\\
&=& R V^* \rho (S) \lim_{\alpha} \sum_{i=1}^{n_\alpha} S^h_{\alpha_i} \otimes h_{\alpha_i} + N \\
&=& R V^* \lim_{\alpha} \sum_{i=1}^{n_\alpha} S S^h_{\alpha_i} \otimes
h_{\alpha_i} + N \\
&=& R \lim_{\alpha} \sum_{i=1}^{n_\alpha} \phi (S S^h_{\alpha_i})  h_{\alpha_i}.	
\end{eqnarray*}
Since $h \in H$ were arbitrarily chosen, we get
\begin{equation*}
\phi_T (S) R = R \phi_T (S).
\end{equation*}		
This is true for all $S \in B(H)$ and $R \in \pi(A)^\prime$. Therefore, the range of $\phi_T$ is contained in $\pi(A)^{\prime\prime}$.
	
Then defining $\theta := \frac{1}{t}\phi_T$ concludes the proof.
\end{proof}

Now we prove a similar result in the setting of Paschke's dilation of completely positive maps. Let $\pi : A \rightarrow B(H)$ be a non-degenerate representation of a unital C*-algebra $A$ and $\phi$ be a weak expectation of $\pi$. Taking $A = B(H)$ and $B = \adcom$ in Theorem \ref{pd} we get the triple $(X, \sigma, e)$ such that 
\begin{equation*}
\phi(S) = \langle \sigma(S)e, e \rangle
\end{equation*}
for all $S \in B(H)$. Here we provide some details and set some notations from the proof of Theorem~\ref{pd} which are useful in the remaining part of this article. We briefly review the construction of the Hilbert $\adcom$-module $X$, the *-representation $\sigma \colon B(H) \rightarrow \mathcal{P}(X)$ and the element $e \in X$. 

The Hilbert $\adcom$-module $X$ is constructed by using the algebraic tensor product $B(H) \otimes \adcom$, which is a right Hilbert $\adcom$-module with respect to the operation $(S \otimes T_1) T_2 = (S \otimes T_1 T_2)$, for all $S \in B(H)$ and $T_1, T_2  \in \adcom$. Define a conjugate bilinear function 
\begin{equation*}
[\cdot ~, \cdot] : \left ( B(H) \otimes \adcom \right ) \times \left ( B(H) \otimes \adcom \right ) \rightarrow \adcom 
\end{equation*} 
as
\begin{equation*}
\left [S_1 \otimes T_1, S_2 \otimes T_2 \right ] := T_2^* \phi(S_2^*S_1)T_1
\end{equation*} 
We get $[\cdot ~, \cdot]$ on $\left ( B(H) \otimes \adcom \right ) \times \left ( B(H) \otimes \adcom \right )$ by extending the above definition linearly. Since $\phi$ is completely positive, $[\cdot ~, \cdot]$ is positive semidefinite. Consider the null-submodule \begin{equation*}
M := \{x \in B(H) \otimes \adcom ~ | ~ [ x , x ] = 0 \}.
\end{equation*} 
Let $X_0 = \left (B(H) \otimes \adcom \right ) / M$. Define a $\adcom$-valued inner product on the quotient space $X_0$ by $[\cdot ~, \cdot]$ (again by using the same notation)
\begin{equation*}
[ x_1 + M, x_2 + M ] := [ x_1, x_2 ].
\end{equation*}
The quotient space $X_0$ is a pre-Hilbert $\adcom$-module with a $\adcom$-valued inner product $[\cdot ~, \cdot]$. Then the Hilbert $\adcom$-module $X$ is the completion of $X_0$ and the element $e \in X$ is given by $e = 1_H \otimes 1_H + M$. Finally the *-representation $\sigma \colon B(H) \rightarrow \mathcal{P}(X)$ is defined on the dense submodule $X_0$ by
\begin{equation*}
\sigma(S) \left (\sum_{i=1}^{k} S_i \otimes T_i + M \right ) := \sum_{i=1}^{k} SS_i \otimes T_i + M
\end{equation*}
and then extending $\sigma(S)$ uniquely to an operator in $\mathcal{P}(X)$.

Now we state the second technical lemma. This lemma is in the Hilbert module setting and is similar to Lemma \ref{ml1}. Before this we clarify some notations. For $\phi \in \rm{WE}(\pi)$, let $X, \sigma, e$ be as defined above. Correspondingly, we define $X^\prime, \tilde{\sigma}, \hat{e}$ as described in Section~\ref{secP}. For an arbitrary but fixed $T \in \mathcal{P}(X^\prime)$ we denote $T(\hat{e})$ as $\tau_T$. We use these notations to state the following lemma.

\begin{lemma} \label{ml2}
Let $T \in \tilde{\sigma}(B(H))^\prime$ with $0 \leq T \leq 1_{X^\prime}$. Suppose $\phi_T : B(H) \rightarrow \pi(A)^{\prime\prime}$ is given by $\phi_T (\cdot) := \langle T \tilde{\sigma}(\cdot) \hat{e}, \hspace{2pt} \hat{e} \rangle$. Then there exists $0 < t <  1$ and $\theta \in \text{WE}(\pi)$ such that $\phi_T = t \theta$ if and only if for all $a \in A$, $\tau_T(\pi(a) \otimes 1_H + M) = t \pi(a)$ for some $0 < t < 1$. 
\end{lemma}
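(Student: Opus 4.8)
The plan is to mirror the structure of Lemma~\ref{ml1} but in the Paschke picture, where the bookkeeping is cleaner because the vector $\tau_T = T(\hat e) \in X^\prime$ encodes all the needed information in one stroke. First I would prove the forward direction. Assume $\phi_T = t\theta$ with $\theta \in \mathrm{WE}(\pi)$ and $0 < t < 1$. Then for $a \in A$ we have $\phi_T(\pi(a)) = t\pi(a)$, and I would unwind the definition $\phi_T(\pi(a)) = \langle T\tilde\sigma(\pi(a))\hat e, \hat e\rangle = \langle \tilde\sigma(\pi(a))\, T\hat e, \hat e\rangle$ (using $T \in \tilde\sigma(B(H))^\prime$), then use Theorem~\ref{xprime}, namely $\langle \hat x, \tau\rangle = \tau(x)$, together with $\tilde\sigma(\pi(a))\widehat{(1_H\otimes 1_H + M)} = \widehat{(\pi(a)\otimes 1_H + M)}$, to rewrite this as $\tau_T(\pi(a)\otimes 1_H + M)$. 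This gives $\tau_T(\pi(a)\otimes 1_H + M) = t\pi(a)$ directly.

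\textbf{Converse direction.} For the converse, assume $\tau_T(\pi(a)\otimes 1_H + M) = t\pi(a)$ for all $a \in A$, with $0 < t < 1$. By Theorem~\ref{prnd}, $\phi_T \in [0,\phi]$, so $\phi_T$ is completely positive; set $\theta := \tfrac{1}{t}\phi_T$, which is then completely positive. There are two things to check: that $\theta$ is a weak expectation (i.e. $\theta(\pi(a)) = \pi(a)$ for all $a$) and that $\theta$ maps into $\pi(A)^{\prime\prime}$. The first is immediate by reversing the computation above: $\phi_T(\pi(a)) = \langle \tilde\sigma(\pi(a))\tau_T{}^{\wedge\text{-lift}}\ldots\rangle$—more precisely $\phi_T(\pi(a)) = \langle T\tilde\sigma(\pi(a))\hat e, \hat e\rangle = \langle \tilde\sigma(\pi(a)) T\hat e, \hat e\rangle = \tau_T(\pi(a)\otimes 1_H + M) = t\pi(a)$, so $\theta(\pi(a)) = \pi(a)$. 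For the range condition, I would fix $R \in \pi(A)^\prime$ and $S \in B(H)$ and show $\phi_T(S)R = R\phi_T(S)$. The natural route is to exhibit a right-module-type intertwining relation for $\tau_T$: since $\tau_T$ is a bounded $\pi(A)^{\prime\prime}$-module map, $\tau_T(y)R$ need not equal $\tau_T(yR)$ in general, so instead I would compute $\langle T\tilde\sigma(S)\hat e, \hat e\rangle R$ by inserting $R$ into the inner product on $X^\prime$ appropriately and using that $\tilde\sigma(\pi(a))$ commutes with $T$ for all $a$, together with density of $\{\sigma(a)(e\cdot b)\}$ in $X$ from Theorem~\ref{pd} to reduce to elementary tensors.

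\textbf{Main obstacle.} The step I expect to be the crux is verifying that the range of $\phi_T$ lies in $\pi(A)^{\prime\prime}$ purely from the single scalar-type condition on $\tau_T$. In Lemma~\ref{ml1} this required the separate hypothesis~(2) about representing nets being compatible with multiplication by $R \in \pi(A)^\prime$; here the claim is that the cleaner condition $\tau_T(\pi(a)\otimes 1_H + M) = t\pi(a)$ suffices by itself. I would try to leverage the fact that $T$ commutes with all of $\tilde\sigma(B(H))$, not just $\tilde\sigma(\pi(A))$: for $R \in \pi(A)^\prime \subseteq B(H)$ one has $R \in B(H)$, so $T$ commutes with $\tilde\sigma(R)$, and this commutation—combined with the module structure and the identity $\langle \hat x,\tau\rangle=\tau(x)$—should force $\phi_T(S)$ to commute with every $R \in \pi(A)^\prime$. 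Concretely, $\phi_T(S)R = \langle T\tilde\sigma(S)\hat e,\hat e\rangle R$ and I would move $R$ through using $\langle \tilde\sigma(R)\hat e$-type manipulations, while $R\phi_T(S) = \langle T\tilde\sigma(S)\hat e, \tilde\sigma(R^*)\hat e\rangle$-type manipulations, and reconcile the two via $T\tilde\sigma(R) = \tilde\sigma(R)T$; getting the adjoints and the $\pi(A)^{\prime\prime}$-valued inner product bookkeeping exactly right is the delicate part, but no genuinely new idea beyond Paschke's framework should be needed.
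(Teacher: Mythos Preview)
Your forward direction matches the paper's. The paper moves $T$ across the inner product by self-adjointness ($0\le T$) rather than by commutation with $\tilde\sigma(\pi(a))$, which keeps the first slot in the form $\hat x$ so that Theorem~\ref{xprime} applies cleanly:
\[
\phi_T(\pi(a))=\langle T\,\widehat{\sigma(\pi(a))e},\hat e\rangle=\langle \widehat{\sigma(\pi(a))e},T\hat e\rangle=\langle \widehat{\sigma(\pi(a))e},\tau_T\rangle=\tau_T(\pi(a)\otimes 1_H+M).
\]
Your variant (commute first, then identify $\tilde\sigma(\pi(a))\hat e$) has a small bookkeeping mismatch---after commuting, $\tilde\sigma(\pi(a))$ acts on $\tau_T$, not on $\hat e$---but this is easily fixed.

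Where you diverge from the paper is the converse, and specifically your ``main obstacle''. There is no obstacle: the range condition $\phi_T(B(H))\subseteq\pi(A)^{\prime\prime}$ is automatic in the Paschke picture and requires no argument. The Hilbert module $X$ is built as a $\pi(A)^{\prime\prime}$-module, its inner product is $\pi(A)^{\prime\prime}$-valued, and Theorem~\ref{prnd} is stated for $B=\pi(A)^{\prime\prime}$; hence $\phi_T(S)=\langle T\tilde\sigma(S)\hat e,\hat e\rangle\in\pi(A)^{\prime\prime}$ for every $S\in B(H)$ by construction. This is precisely why the statement already writes $\phi_T:B(H)\to\pi(A)^{\prime\prime}$ in the hypothesis, and why the paper's entire converse is the single line ``define $\theta:=\tfrac1t\phi_T$''. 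The contrast with Lemma~\ref{ml1} is the whole point: in the Stinespring picture the Radon--Nikodym derivative $V^*T\rho V$ lands only in $B(H)$, which forces the extra hypothesis~(2) there; Paschke's module-valued inner product absorbs that work. So your proposed commutator computation with $R\in\pi(A)^\prime$ is correct in spirit but entirely unnecessary.
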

\begin{proof}
We know from Theorem \ref{prnd} that $\phi_T$ is a completely positive map from $B(H)$  into $\pi(A)^{\prime\prime}$ with $\phi_T \leq \phi$. For any $a \in A$, we have
\begin{eqnarray*}
\phi_T(\pi(a)) &=& \langle T \tilde{\sigma}(\pi(a)) \hat{e},~\hat{e} \rangle \\
&=& \langle T \widetilde{\sigma(\pi(a))} \hat{e},~\hat{e} \rangle \\
&=& \langle T \widehat{\sigma(\pi(a))e},~\hat{e} \rangle \\
&=& \langle \widehat{\sigma(\pi(a))e},~\tau_T \rangle \\
&=& \tau_T(\pi(a) \otimes 1_H + M)	
\end{eqnarray*}
The last step follows from the Theorem \ref{xprime}.

Suppose there exists $0 < t <  1$ and $\theta \in \text{WE}(\pi)$ such that $\phi_T = t \theta$, then 
\begin{equation*}
\phi_T(\pi(a)) =  \tau_T(\pi(a) \otimes 1_H + M) = t \pi(a).
\end{equation*}

Conversely, if for all $a \in A$, $\tau_T(\pi(a) \otimes 1_H + M) = t \pi(a)$ for some $0 < t <  1$, then we get the result by defining $\theta := \frac{1}{t} \phi_T$.			
\end{proof}

Now we define a few sets which are useful in our analysis.

Let $\pi: A \rightarrow B(H)$ be a non-degenerate representation of a unital C*-algebra $A$. For $\phi \in \text{WE}(\pi)$, let $\phi=V^* \rho V$ be the minimal Stinespring dilation of $\phi$ on the Hilbert space $K$ and $(X, X^\prime, \sigma, \tilde{\sigma}, e, \hat{e})$ be the corresponding triple and extensions from the Paschke dilation of $\phi$ on the Hilbert module $X$. Then we define the following sets:\\

\begin{itemize}
\item $S_\phi := \{ T \in \rho (B(H))^\prime  ~ |~  0 \leq T \leq  1_K ;~ T ~ \rm{as~in~Lemma ~\ref{ml1}}\}$
\item $\mathcal{C}^S_\phi:= \{ \alpha_1 T_1 + \alpha_2 T_2 ~ | ~ \alpha_1, \alpha_2 \in [-1, 1]; ~ T_1, T_2 \in S_\phi \}$ 
\item $P_\phi := \{ T \in \tilde{\sigma}(B(H))^\prime ~ | ~ 0 \leq T \leq 1_{X^\prime}; ~ T ~ \rm{as~ in~ Lemma ~\ref{ml2}}\}$
\item $\mathcal{C}^P_\phi := \{ \alpha_1 T_1 + \alpha_2 T_2 ~ | ~ \alpha_1, \alpha_2 \in [-1, 1]; ~ T_1, T_2 \in P_\phi\}$\\	
\end{itemize}

Finally, we recall a definition due to Arveson from \cite{A69}:

\begin{definition}
Let $H$ be a Hilbert space and $H_0$ be a closed subspace of $H$. Let $P$ denote the projection of $H$ onto $H_0$, then, we say that $H_0$ is a faithful subspace of $H$ for a set of operators $A \subseteq B(H)$ containing $0$, if, for every $a \in A$, $PaP = 0$ implies $a =0$.  	
\end{definition}

Now we are ready to present the main result of this section.

\begin{theorem} \label{mt2}
Let $\pi: A \rightarrow B(H)$ be a non-degenerate representation of a unital C*-algebra $A$ and $\rm{WE}(\pi)\neq \emptyset$. Let $\phi=V^* \rho V \in \text{WE}(\pi)$ denote the minimal Stinespring dilation of $\phi$ on the Hilbert space $K$ and $\phi(\cdot) = \langle \sigma(\cdot)e, e \rangle$ denote the Paschke dilation of $\phi$. Then the following statements are equivalent:
\begin{enumerate}
\item The map $\phi$ is an extreme point of $\text{WE}(\pi)$.
\item If $\phi_0 \in [0, \phi] \cap t\text{WE}(\pi)$, then $\phi_0 = t\phi$ where, $0 < t < 1$ $( [0, \phi]$ is considered as a subset of $CP(B(H), \pi(A)^{\prime \prime}) )$.
\item The subspace $[\rho(\pi(A))VH]$ is a faithful subspace of $K$ for $\mathcal{C}^S_\phi$.  
\item If any $T \in \mathcal{C}^P_\phi$ satisfies $\langle T \tilde{\sigma}(\pi(a))\hat{e}, \hat{e} \rangle = 0 ~\forall~ a\in A$, then $T=0$. 
\end{enumerate}		
\end{theorem}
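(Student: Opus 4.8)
The plan is to prove the cycle of implications $(1)\Leftrightarrow(2)$, then $(2)\Leftrightarrow(3)$, then $(3)\Leftrightarrow(4)$, with the Radon--Nikodym machinery (Theorem~\ref{rnd} and Theorem~\ref{prnd}) doing the heavy lifting in translating between the three concrete pictures.

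The equivalence $(1)\Leftrightarrow(2)$ should be essentially a formal unpacking of the definition of an extreme point. If $\phi$ is extreme and $\phi_0\in[0,\phi]\cap t\,\mathrm{WE}(\pi)$ with $0<t<1$, write $\phi_0=t\theta$ with $\theta\in\mathrm{WE}(\pi)$; then $\phi-\phi_0$ is a completely positive map with $(\phi-\phi_0)(1_H)=(1-t)1_H$, so $\psi:=\tfrac{1}{1-t}(\phi-\phi_0)$ is again in $\mathrm{WE}(\pi)$ (it is u.c.p.\ into $\pi(A)''$ and fixes $\pi(A)$ since $\phi$ and $\theta$ do), and $\phi=t\theta+(1-t)\psi$ forces $\theta=\phi$, i.e.\ $\phi_0=t\phi$. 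Conversely, if $\phi=t\theta_1+(1-t)\theta_2$ is a proper convex combination in $\mathrm{WE}(\pi)$, then $\phi_0:=t\theta_1\in[0,\phi]\cap t\,\mathrm{WE}(\pi)$, so $(2)$ gives $\theta_1=\phi$. The only point to check carefully is that the range of $\psi$ lands in $\pi(A)''$, which is immediate since $\pi(A)''$ is a subspace.

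For $(2)\Leftrightarrow(3)$: by Arveson's Theorem~\ref{rnd}, the maps $\phi_0\in[0,\phi]$ are exactly the $\phi_T=V^*T\rho V$ with $T\in\rho(B(H))'$, $0\le T\le 1_K$, and by Lemma~\ref{ml1} the additional condition ``$\phi_0\in t\,\mathrm{WE}(\pi)$ for some $0<t<1$'' is precisely ``$T\in S_\phi$''. So $(2)$ says: every $T\in S_\phi$ satisfies $\phi_T=t\phi=\phi_{tI}$, i.e.\ (again by the injectivity in Theorem~\ref{rnd}) $T=tI$ on $K$; equivalently $T-tI$ annihilates $\phi$ in the sense that $V^*(T-tI)\rho(\cdot)V=0$, i.e.\ $\langle(T-tI)\rho(S)Vh, \rho(S')Vh'\rangle=0$ for all $S,S'\in B(H)$, $h,h'\in H$ — wait, more precisely $\langle(T-tI)\rho(S)Vh,Vh'\rangle=0$, and then using $T\in\rho(B(H))'$ this propagates to all of $[\rho(B(H))VH]=K$. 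To connect with the faithful-subspace formulation one argues: $PTP=0$ for $P$ the projection onto $[\rho(\pi(A))VH]$, applied to elements of $\mathcal{C}^S_\phi$. The translation is that a self-adjoint $T\in S_\phi$ with $\phi_T$ a scalar multiple of $\phi$ is detected by compressing to $[\rho(\pi(A))VH]$: indeed $\phi_T(\pi(a))=t\pi(a)$ already says $V^*\rho(\pi(a))(T-tI)\rho(\pi(b))V=0$ for all $a,b$, i.e.\ $P(T-tI)P=0$ after identifying $V^*\rho(\pi(a))$ with compression; faithfulness of the subspace for $\mathcal{C}^S_\phi$ then forces $T-tI=0$. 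The role of $\mathcal{C}^S_\phi$ (differences/combinations with coefficients in $[-1,1]$) is exactly to turn the two-sided statement ``$T_1=tI$ and $T_2=sI$'' into the single vanishing statement about $T_1-T_2$ type elements, so one must be a little careful to record that $S_\phi$ is not a subspace but its ``symmetrization'' $\mathcal{C}^S_\phi$ captures the right vanishing condition.

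The implication $(3)\Leftrightarrow(4)$ is the analogue of $(2)\Leftrightarrow(3)$ with Paschke's picture replacing Stinespring's: by Theorem~\ref{prnd} the order interval $[0,\phi]$ corresponds to $\{T\in\tilde\sigma(B(H))'\,:\,0\le T\le 1_{X'}\}$, by Lemma~\ref{ml2} the weak-expectation constraint cuts this down to $P_\phi$, and $\phi_T(\pi(a))=\langle T\tilde\sigma(\pi(a))\hat e,\hat e\rangle$, which by the computation in the proof of Lemma~\ref{ml2} equals $\tau_T(\pi(a)\otimes 1_H+M)$. So the condition in $(4)$, ``$\langle T\tilde\sigma(\pi(a))\hat e,\hat e\rangle=0$ for all $a$ implies $T=0$ for $T\in\mathcal{C}^P_\phi$'', is the Hilbert-module transcription of the faithful-subspace condition in $(3)$. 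One may either prove $(2)\Leftrightarrow(4)$ directly, mirroring the Stinespring argument, or invoke the unitary-type correspondence between the minimal Stinespring space $K$ and the Paschke module $X'$ (both being ``$GNS$-type'' completions of $B(H)\otimes(\cdot)$) to transport $(3)$ to $(4)$; I would do it directly to keep the module inner products honest.

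The main obstacle will be the bookkeeping in $(2)\Leftrightarrow(3)$: one must verify that ``$T\in S_\phi$ forces $T=tI$'' is genuinely equivalent to the faithful-subspace statement for the \emph{whole} set $\mathcal{C}^S_\phi$ rather than just for positive $T$, i.e.\ that the passage from the order interval to its linear span $\mathcal{C}^S_\phi$ does not lose or gain information, and that the compression of $T-tI$ to $[\rho(\pi(A))VH]$ vanishing is equivalent (via $T\in\rho(B(H))'$ and minimality/non-degeneracy of $\pi$) to $T-tI$ itself vanishing on all of $K$. Getting the direction of this last equivalence right — that $P(T-tI)P=0$ together with commutation with $\rho(B(H))$ propagates to $T-tI=0$ on $[\rho(B(H))VH]=K$ — is where the argument has real content, and it is exactly the place where Arveson's original faithful-subspace idea is adapted to the constrained setting of weak expectations.
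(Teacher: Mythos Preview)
Your route is viable and in places more economical than the paper's. The paper proves $(1)\Leftrightarrow(2)$, $(1)\Leftrightarrow(3)$, $(1)\Leftrightarrow(4)$ separately. For $(1)\Rightarrow(2)$ it passes through the Stinespring picture and Lemma~\ref{ml1} to construct the complementary map $\phi_2=V^*\tfrac{1_K-T}{1-t}\rho V$ and verify via the net representation that it lies in $\mathrm{WE}(\pi)$, whereas your direct $\psi:=\tfrac{1}{1-t}(\phi-\phi_0)$ needs none of that machinery. For $(1)\Rightarrow(3)$ and $(1)\Rightarrow(4)$ the paper uses Arveson's scaling device: given $T\in\mathcal{C}^S_\phi$ with $PTP=0$, choose $s,t>0$ with $\tfrac14\,1_K\le sT+t\,1_K\le\tfrac34\,1_K$, manufacture two scaled weak expectations from $sT+t\,1_K$ and its complement, and apply extremality to conclude $T=0$. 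Your path through $(2)$ avoids this entirely: once $(2)$ holds, injectivity in Theorem~\ref{rnd} forces every $T\in S_\phi$ to equal $t\,1_K$ for its associated $t$, so $\mathcal{C}^S_\phi$ consists only of scalar multiples of $1_K$, and faithfulness of any nonzero subspace is then automatic. You should make this last observation explicit, since it is the full content of $(2)\Rightarrow(3)$ and is only implicit in your sketch; the analogous remark applies to $(2)\Rightarrow(4)$.

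One correction to your final paragraph: the implication ``$P(T-tI)P=0\Rightarrow T-tI=0$ on $K$'' is \emph{not} a consequence of $T\in\rho(B(H))'$ together with minimality and non-degeneracy. Commutation lets you propagate vanishing from $VH$ to $[\rho(B(H))VH]=K$ only if you start from $\langle(T-tI)\rho(S)Vh,Vh'\rangle=0$ for all $S\in B(H)$; the condition $P(T-tI)P=0$ supplies this only for $S\in\pi(A)$. That propagation is precisely hypothesis $(3)$, not a structural fact, so in the direction $(3)\Rightarrow(2)$ you are invoking faithfulness (after checking $T-t\,1_K\in\mathcal{C}^S_\phi$, which you should also record), not deriving it.
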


\begin{proof}
We proceed by showing that $(1)\Leftrightarrow (2), (3), (4)$.\\
	
$(1)\Leftrightarrow (2)$: Suppose $\phi$ is an extreme point of $\text{WE}(\pi)$ and $\phi_0 \in [0, \phi] \cap t\text{WE}(\pi)$. Then $\phi_0 = t \phi_1 \leq \phi$ for some $\phi_1 \in \text{WE}(\pi)$. Using Theorem \ref{rnd}, we get $\phi_0 = t\phi_1 = V^* T \rho V$ for $T \in \rho(B(H))^\prime$ with $0 \leq T \leq  1_K$. From Lemma \ref{ml1} we have for an arbitrary but fixed $h \in H$ and any representing net $\left \{ \sum\limits_{i=1}^{n_\alpha} S^h_{\alpha_i} \otimes h_{\alpha_i} + N \right \}_\alpha$ corresponding to $T(1_H \otimes h + N)$ one has 
\begin{enumerate}
\item $\lim\limits_\alpha \sum\limits_{i=1}^{n_\alpha} \phi (S^h_{\alpha_i}) h_{\alpha_i} = th$;
\item for any $R \in \pi(A)^\prime$, the vector of the form $T(1_H \otimes R h + N)$ is represented by $T(1_H \otimes R h + N) = \underset{\alpha}{\lim} \sum\limits_{i=1}^{n_\alpha} S^h_{\alpha_i} \otimes R h_{\alpha_i} + N$.		
\end{enumerate} 
	
Define $\phi_2 := V^* \frac{(1_K - T)}{1 - t} \rho V$, then
\begin{eqnarray*}
\phi_2(\pi(a))h &=& V^*  \rho(\pi(a)) \frac{(1_K - T)}{1 - t} V h \\
&=& \frac{V^*}{1 - t}  \rho(\pi(a)) \left (1_H \otimes h + N - \lim_{\alpha} \sum_{i=1}^{n_\alpha} S^h_{\alpha_i} \otimes h_{\alpha_i} + N \right )\\
&=&  \frac{V^*}{1 - t} \left (\pi(a) \otimes h + N - \lim_{\alpha} \sum_{i=1}^{n_\alpha} \pi(a) S^h_{\alpha_i} \otimes h_{\alpha_i} + N \right )\\
&=& \frac{1}{1 - t} \left (\phi(\pi(a))h - \lim_{\alpha} \sum_{i=1}^{n_\alpha} \phi \left (\pi(a) S^h_{\alpha_i} \right ) h_{\alpha_i} \right )\\
&=& \frac{1}{1 - t} \left ( \pi(a)h -t\pi(a)h \right ) = \pi(a)h. 
\end{eqnarray*} 
Since, this is true for any $h \in H$, we have $\phi_2 (\pi(a)) = \pi(a)$ for all $a \in A$.

Now we show that the range of $\phi_2$ is contained in $\pi(A)^{\prime\prime}$. If $R \in \pi(A)^\prime$, then for all $ S \in B(H)$
\begin{eqnarray*}
\phi_2 (S) R h &=& V^*  \rho(S) \frac{(1_K - T)}{1 - t} V R h \\
&=& \frac{V^*}{1 - t} \left (S \otimes Rh + N - \lim_{\alpha} \sum_{i=1}^{n_\alpha} SS^h_{\alpha_i} \otimes Rh_{\alpha_i} + N \right )\\
&=& \frac{1}{1 - t} \left (\phi(S)Rh - \lim_{\alpha} \sum_{i=1}^{n_\alpha} \phi (S S^h_{\alpha_i}) R h_{\alpha_i} \right ) \\
&=& \frac{R}{1 - t} \left (\phi(S)h - \lim_{\alpha} \sum_{i=1}^{n_\alpha} \phi (S S^h_{\alpha_i})  h_{\alpha_i} \right )
\end{eqnarray*}
and 		
\begin{eqnarray*}
R \phi_2 (S) h &=& R V^* \rho (S) \frac{(1_K - T)}{1 - t} V h \\
&=& \frac{R}{1 - t} V^* \left (S \otimes h + N - \lim_{\alpha} \sum_{i=1}^{n_\alpha} SS^h_{\alpha_i} \otimes h_{\alpha_i} + N \right )\\
&=& \frac{R}{1 - t} \left (\phi(S)h - \lim_{\alpha} \sum_{i=1}^{n_\alpha} \phi (S S^h_{\alpha_i})  h_{\alpha_i} \right ).
\end{eqnarray*}
Therefore, the range of $\phi_2$ is contained in $\pi(A)^{\prime\prime}$. This implies $\phi_2$ is a weak expectation of $\pi$.

For $\phi_1, \phi_2 \in WE(\pi)$, we get $\phi = t\phi_1 + (1-t)\phi_2$. Since $\phi$ is an extreme point, $\phi = \phi_1 = \phi_2$. Hence, $\phi_0 = t \phi_1 = t \phi$ 

Conversely, if $\phi_0 \in [0, \phi] \cap t\rm{WE}(\pi)$, then $\phi_0 = t\phi$ where, $0 < t < 1$. Suppose $\phi = \lambda\phi_1 + (1 - \lambda) \phi_2$, for $\phi_1, \phi_2 \in \rm{WE}(\pi)$ and $0 < \lambda < 1$. Then $\lambda\phi_1 \in [0, \phi] \cap \lambda\rm{WE}(\pi)$, hence $\lambda\phi_1 = \lambda\phi$ and similarly $(1 - \lambda) \phi_2 = (1- \lambda) \phi$. This implies $\phi_1 = \phi = \phi_2$.		
	
$(1)\Leftrightarrow (3)$: Assume that $[\rho(\pi(A))VH]$ is a faithful subspace for $\mathcal{C}^S_\phi$. Let $\phi = \lambda \phi_1 + (1 - \lambda) \phi_2$ for $\phi_1, \phi_2 \in \rm{WE}(\pi)$ and $0 < \lambda < 1$. Since $\lambda \phi_1 \leq \phi$, using Theorem \ref{rnd} we get, $\lambda \phi_1 = V^* T \rho V$ for some $T \in \rho(B(H))^\prime$ such that $0 \leq T \leq 1_K$. Moreover the operator $T$ satisfies equations (1) and (2) from Lemma \ref{ml1} for $t = \lambda$. Let $P$ be the projection of the Hilbert space $K$ onto $[\rho(\pi(A))VH]$. Let $a_1, a_2 \in A$ and $h_1, h_2, \in H$, 
\begin{eqnarray*}
\left \langle P T \rho(\pi(a_1)) V h_1, \hspace{5pt} \rho(\pi(a_2)) V h_2 \right \rangle &=& \left \langle T \rho(\pi(a_1)) V h_1, ~ \rho(\pi(a_2)) V h_2 \right \rangle \\
&=& \left \langle V^* T \rho(\pi(a_2)^*\pi(a_1)) V h_1, ~ h_2 \right \rangle \\
&=& \left \langle \lambda \pi(a_2)^*\pi(a_1) h_1, ~ h_2 \right \rangle \\
&=& \lambda \left \langle V^* \rho(\pi(a_2)^*\pi(a_1)) V h_1, ~ h_2 \right \rangle \\
&=&  \left \langle \lambda \rho(\pi(a_1)) V h_1, ~ \rho(\pi(a_2)) V h_2 \right \rangle
\end{eqnarray*}
Therefore, $PT = \lambda 1_{[\rho(\pi(A))VH]}$. This implies that $P(T - \lambda 1_K)_{[\rho(\pi(A))VH]}$ is $0$. Since, $[\rho(\pi(A))VH]$ is a faithful subspace of $K$, for $\mathcal{C}^S_\phi$ and $T - \lambda 1_K \in \mathcal{C}^S_\phi$ we get, $T = \lambda 1_K$. Hence, $\lambda \phi_1 = V^* T \rho V = \lambda \phi$, and $\phi_1 = \phi = \phi_2$.

Conversely, let us assume that $\phi$ is an extreme point of $\rm{WE}(\pi)$. Define a positive linear map $\mu: \rho(B(H))^\prime \rightarrow B([\rho(\pi(A))VH])$ by, $\mu (T) := PT_{[\rho(\pi(A))VH]}$. When $T \in \mathcal{C}^S_\phi$, $T$ is self-adjoint. So we may choose positive scalars $s, t$ such that $\frac{1}{4} 1_K \leq sT + t 1_K \leq \frac{3}{4} 1_K$. Then, as $\mu$ is positive, $\frac{1}{4} \mu(1_K) \leq \mu(sT + t 1_K) \leq \frac{3}{4} \mu(1_K)$. Now let $\mu(T) = 0$. So we get: $\frac{1}{4} 1_{[\rho(\pi(A))VH]} \leq  t 1_{[\rho(\pi(A))VH]}\leq \frac{3}{4} 1_{[\rho(\pi(A))VH]}$. This implies that $0 < t < 1$. Next, define: 		
\begin{equation*}
\phi_1 := V^*(sT + t 1_K) \rho V ~ \rm{and} ~ \phi_2 := V^*(1_K - (sT + t 1_K)) \rho V.
\end{equation*} 
Then from Theorem \ref{rnd} we get that $\phi_1$ and $\phi_2$ are completely positive maps which are less than $\phi$ and $\phi_1 + \phi_2 = \phi$. Also, for all $a\in A$: 
\begin{eqnarray*}
\phi_1(\pi(a)) &=& V^*(sT + t 1_K) \rho(\pi(a)) V \\
&=& V^* sT \rho(\pi(a)) V + V^* t 1_K \rho(\pi(a)) V \\
&=& V^*VV^* sT \rho(\pi(a)) V + V^* t 1_K \rho(\pi(a)) V \\
&=& 0 + t \pi(a) = t \pi(a).
\end{eqnarray*}
The last equality follows from the fact that $V$ is an isometry (since $\phi$ is unital) and $[VH]\subseteq [\rho(\pi(A))VH]$. Similarly, we have $\phi_2(\pi(a)) = (1 - t) \pi(a)$ for all $a \in A$. 

Now we have: $\phi_1 =  V^*(sT + t 1_K) \rho V$ and $T \in \mathcal{C}^S_\phi$. Let $T = \alpha_1 T_1 + \alpha_2 T_2$, where $\alpha_i \in [-1, 1]$ and $T_i\in S_\phi$ for $i = 1, 2$. Therefore we get, 
\begin{equation*}
\phi_1 = V^*sT \rho V + V^* t 1_K \rho V = V^*s \alpha_1 T_1 \rho V + V^* s \alpha_2 T_2 \rho V + V^* t 1_K \rho V
\end{equation*}
Since both $T_1, T_2$ satisfy the second equation in Lemma \ref{ml1} we get that the range of $\phi_1$ is in $\pi(A)^{\prime \prime}$. Similarly the range of $\phi_2$ is in $\pi(A)^{\prime \prime}$. Therefore, we have: $\frac{1}{t} \phi_1, \frac{1}{1 - t} \phi_2 \in \text{WE}(\pi)$.		
	
Now, recall that $\phi = \phi_1 + \phi_2$. Writing $\phi = t (\frac{1}{t} \phi_1) + (1 - t) (\frac{1}{1 - t} \phi_2)$ and using the hypothesis that $\phi$ is an extreme point, we get: $$\phi = \frac{1}{t} \phi_1 = \frac{1}{1 - t} \phi_2.$$. 
This gives us
\begin{equation*}
\phi_1= V^*(sT + t 1_K) \rho V = V^* t1_K \rho V = t\phi \leq \phi.
\end{equation*}
By the order preserving affine isomorphism of Theorem \ref{rnd}, we must have $sT + t 1_K = t 1_K$, which in turn implies that $T=0$. This shows that, the map $\mu$ is injective on $\mathcal{C}^S_\phi$ and therefore $[\rho(\pi(A))VH]$ is a faithful subspace of $K$ for $\mathcal{C}^S_\phi$.
	
$(1)\Leftrightarrow (4)$: Let $\phi(\cdot) = \langle \sigma(\cdot)e, e \rangle$ be as in Theorem \ref{pd}. Recall, from the discussion before Theorem \ref{prnd} that, we get a representation $\tilde{\sigma}: B(H) \rightarrow \mathcal{P}(X^\prime)$ by $\tilde{\sigma}(S) := \widetilde{\sigma(S)}$ for all $S \in B(H)$. Also from Theorem \ref{prnd}, we have $\phi(\cdot) = \langle \sigma(\cdot)e, e \rangle = \langle 1_{X^\prime} \tilde{\sigma}(\cdot)\hat{e}, \hat{e} \rangle$. Suppose for $T \in \mathcal{C}^P_\phi$,  $\langle T \tilde{\sigma}(\pi(a))\hat{e}, \hat{e} \rangle = 0$ for all $a \in A$ implies $T=0$. Then we show that $\phi$ is an extreme point of $\rm{WE}(\pi)$. If $\phi = \lambda \phi_1 + (1 - \lambda) \phi_2$ for $\phi_1, \phi_2 \in \rm{WE}(\pi)$ and $0 < \lambda < 1$, then $\lambda \phi_1 \leq \phi$. Therefore by Theorem \ref{prnd} we get $T_1 \in \tilde{\sigma}(\mathcal{B}(H))^\prime$ with $0 \leq T_1 \leq 1_{X^\prime}$ such that $\lambda \phi_1(S) = \langle T_1 \tilde{\sigma}(S)\hat{e}, \hat{e} \rangle$ for all $S \in B(H)$. Moreover $T_1$ satisfies the condition in Lemma \ref{ml2} for $t = \lambda$. Hence, for $a \in A$ we have:
\begin{equation*}
\lambda \phi_1(\pi(a)) = \langle T_1 \tilde{\sigma}(\pi(a))\hat{e}, \hat{e} \rangle = \lambda \pi(a) = \lambda \langle 1_{X^\prime} \tilde{\sigma}(\pi(a))\hat{e}, \hat{e} \rangle.
\end{equation*}
Since, this is true for all $a \in A$ we get $\langle (T_1 - \lambda 1_{X^\prime}) \tilde{\sigma}(\pi(a))\hat{e}, \hat{e} \rangle = 0$ for all $a \in A$. Since $T_1 - \lambda 1_{X^\prime} \in \mathcal{C}^P_\phi$, by hypothesis, we get  $T_1 = \lambda 1_{X^\prime}$. Therefore, $\lambda \phi_1(\cdot)= \langle \lambda \tilde{\sigma}(\cdot)\hat{e}, \hat{e} \rangle = \lambda \phi(\cdot)$. This implies $\phi_1 = \phi = \phi_2$.
	
Conversely, assume that $\phi$ is an extreme point of $\rm{WE}(\pi)$. Let $T \in \mathcal{C}^P_\phi$ be such that $\langle T \tilde{\sigma}(\pi(a))\hat{e}, \hat{e} \rangle = 0$ for all $a \in A$.
For $a\in A$ define $\mu_a: \tilde{\sigma}(B(H))^\prime \rightarrow \pi(A)^{\prime \prime}$ by $S \mapsto \langle S \tilde{\sigma}(\pi(a))\hat{e}, \hat{e} \rangle$. By assumption, we have $\mu_a(T) = 0$ for all $a \in A$. If $T \in \mathcal{C}^P_\phi$, then $T = T^*$. Choose positive scalars $s, t$ such that $\frac{1}{4} 1_{X^\prime} \leq sT + t 1_{X^\prime} \leq \frac{3}{4} 1_{X^\prime}$. For an arbitrary but fixed $a \in A$, consider the positive map $\mu_{a^*a}$ defined as before. So we get $$\frac{1}{4} \mu_{a^*a}(1_{X^\prime}) \leq \mu_{a^*a}(sT + t 1_{X^\prime}) \leq \frac{3}{4} \mu_{a^*a}(1_{X^\prime}).$$ Since, $\mu_{a^*a}(T) = 0$, we have: 
\begin{eqnarray*}
\frac{1}{4} \langle \tilde{\sigma}(\pi(a))\hat{e}, \tilde{\sigma}(\pi(a))\hat{e} \rangle &\leq&  t \langle \tilde{\sigma}(\pi(a))\hat{e}, \tilde{\sigma}(\pi(a))\hat{e} \rangle \\
&\leq& \frac{3}{4} \langle \tilde{\sigma}(\pi(a))\hat{e}, \tilde{\sigma}(\pi(a))\hat{e} \rangle.
\end{eqnarray*} 
This implies that $0 < t < 1$. Define $\phi_i: B(H) \rightarrow \pi(A)^{\prime \prime}$ for $i=1, 2$ by
\begin{eqnarray*}
\phi_1(\cdot) &:=& \langle (sT + t 1_{X^\prime}) \tilde{\sigma}(\cdot) \hat{e}, \hat{e} \rangle  \hspace{5pt} \rm{and} \\
\phi_2(\cdot) &:=& \langle (1_{X^\prime} - (sT + t 1_{X^\prime})) \tilde{\sigma}(\cdot) \hat{e}, \hat{e} \rangle .
\end{eqnarray*}
Then by using Theorem \ref{prnd}, we have $0 \leq \phi_1, \phi_2 \leq \phi$ and $\phi_1 + \phi_2 = \phi$. Further, for all $a \in A$ we have 
\begin{equation*}
\phi_1(\pi(a)) = t \pi(a) ~ \rm{~and~} ~ \phi_2(\pi(a)) = (1-t) \pi(a)
\end{equation*}
as well as that, the ranges of both $\phi_1$, $\phi_2$ are in $\pi(A)^{\prime \prime}$. Therefore, $t^{-1} \phi_1$ and $(1-t)^{-1}\phi_2$ belong to $\rm{WE}(\pi)$. Write $\phi = t (\frac{1}{t} \phi_1) + (1 - t) (\frac{1}{1 - t} \phi_2)$. As $\phi$ is an extreme point, $\phi = \frac{1}{t} \phi_1 = \frac{1}{1 - t} \phi_2$. This gives
\begin{equation*}
\phi_1(\cdot) = \langle (sT + t 1_{X^\prime}) \tilde{\sigma}(\cdot) \hat{e}, \hat{e} \rangle = \langle t 1_{X^\prime} \tilde{\sigma}(\cdot) \hat{e}, \hat{e} \rangle.
\end{equation*}
By the affine order isomorphism from Theorem \ref{prnd} we get $T = 0$.
\end{proof}

Recall from Section \ref{sec;I} that the family of continuous linear functionals on $E = \rm{CB}(B(H), \adcom)$ given by:
\begin{equation*}
\{ f^S_{(h, k)}: E  \rightarrow \bbC ~ | ~ f^S_{(h, k)}(\phi) := \la \phi(S)h, k \ra; S \in B(H), h,k \in H, \phi \in E \}.
\end{equation*}
separates the points in $X = \rm{WE}(\pi)$. Equipped with the precise description of the extreme points of the compact convex set $\rm{WE}(\pi)$ from Theorem \ref{mt2} and using the separating family of functionals described above, we can now complete the barycentric description of an element in $\rm{WE}(\pi)$ by an application of the classical Choquet-Bishop-de Leeuw Theorem.

\begin{theorem}[barycentric decomposition of a weak expectation]
For every $\phi \in \rm{WE}(\pi)$, there exists a probability measure $\mu_\phi$ which vanishes on every Baire set of $\rm{WE}(\pi)$ disjoint from the set extreme points of  $\rm{WE}(\pi)$, i.e. pseudo supported on the set of extreme points of $\rm{WE}(\pi)$, and is given by
\begin{equation*}
f^S_{(h, k)}(\phi) = \int_{\rm{WE}(\pi)} f^S_{(h, k)} \, \mathrm{d} \mu_\phi 
\end{equation*}
for every $S \in B(H)$ and $h, k \in H$.
\end{theorem}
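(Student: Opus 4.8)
The plan is to obtain the statement as an immediate application of the Choquet--Bishop--de Leeuw theorem (Theorem \ref{cbl}), now that Theorem \ref{mt2} has identified $\mathrm{ext}(\mathrm{WE}(\pi))$. I would take the locally convex topological vector space to be $E = \mathrm{CB}(B(H), \adcom)$ with the BW-topology, and the compact convex set to be $X = \mathrm{WE}(\pi)$; this is legitimate since $\mathrm{WE}(\pi)$ is non-empty by the standing hypothesis, is BW-closed and convex in $\mathrm{CP}_1(B(H), \adcom)$, and the latter is BW-compact. Fix $\phi \in \mathrm{WE}(\pi)$.

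First I would apply Theorem \ref{cbl} with $x_0 = \phi$. This produces a Baire measure $\mu_\phi$ on $\mathrm{WE}(\pi)$ pseudo-supported on $\mathrm{ext}(\mathrm{WE}(\pi))$ --- equivalently, $\mu_\phi$ vanishes on every Baire subset of $\mathrm{WE}(\pi)$ disjoint from $\mathrm{ext}(\mathrm{WE}(\pi))$ --- and satisfying $f(\phi) = \int f \, d\mu_\phi$ for every $f \in E^*$. The measure supplied by Choquet theory is a positive Radon measure; to see that it is a probability measure, note that every $\psi \in \mathrm{WE}(\pi)$ is unital, so (as $\pi$ is a non-degenerate representation of a unital C*-algebra, whence $\pi(1_A)=1_H$) the functional $f^{1_H}_{(h,h)}$ is identically $\|h\|^2$ on $\mathrm{WE}(\pi)$; taking $h$ a unit vector and applying the barycenter identity gives $\mu_\phi(\mathrm{WE}(\pi)) = \int f^{1_H}_{(h,h)} \, d\mu_\phi = f^{1_H}_{(h,h)}(\phi) = 1$.

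Next I would specialize the barycenter identity to the separating family $\{f^S_{(h,k)}\}$ introduced in Section \ref{sec;I}. Each $f^S_{(h,k)}$ is a BW-continuous linear functional on $E$ by the very definition of the BW-topology (which is precisely why that family was available to separate the points of $\mathrm{WE}(\pi)$), so $f^S_{(h,k)} \in E^*$ and therefore
\[
f^S_{(h,k)}(\phi) = \int_{\mathrm{WE}(\pi)} f^S_{(h,k)} \, d\mu_\phi
\]
for all $S \in B(H)$ and $h,k \in H$, which is the assertion. I would close by remarking that, in combination with Theorem \ref{mt2}, the set on which $\mu_\phi$ is pseudo-supported is now explicitly described, so the barycentric picture of a weak expectation is complete.

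\emph{Expected main obstacle.} There is no genuinely hard step once Theorem \ref{mt2} is available; the only points requiring care are (i) confirming that the BW-topology does make $E$ a locally convex space with all $f^S_{(h,k)}$ continuous, so that Theorem \ref{cbl} applies and these functionals fall under its conclusion, and (ii) upgrading the bare ``Baire measure'' of Theorem \ref{cbl} to a probability measure, which the unitality normalization above handles. I would also point out explicitly that, without metrizability of $\mathrm{WE}(\pi)$, one cannot in general replace ``pseudo-supported'' by ``supported'', since $\mathrm{ext}(\mathrm{WE}(\pi))$ need not be a Baire set; when $H$ is separable, $\mathrm{CP}_1(B(H),\adcom)$ is BW-metrizable and one recovers a genuine Choquet measure carried by the extreme points.
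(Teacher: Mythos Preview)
Your proposal is correct and matches the paper's approach exactly: the paper does not give a separate proof of this theorem but simply states it as an immediate application of the Choquet--Bishop--de Leeuw theorem (Theorem \ref{cbl}) to $E=\rm{CB}(B(H),\adcom)$ and $X=\rm{WE}(\pi)$, once Theorem \ref{mt2} has identified the extreme points. Your added verification that $\mu_\phi$ is a probability measure and your remark on pseudo-support versus support are sound embellishments not present in the paper.
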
  	


\section*{Acknowledgement} 
The first named author is partially supported by Science and Engineering Board (DST, Govt. of India) grant no. ECR/2018/000016 and the second named author is supported by CSIR PhD scholarship award letter no. 09/1020(0142)/2019-EMR-I. 

\subsection*{Declaration}
The authors declare that there is no conflict of interest.

\end{document}